\newtheorem{theorem}{Theorem}[section]
\newtheorem{lemma}[theorem]{Lemma}
\newtheorem{propo}[theorem]{Proposition}
\newtheorem{remark}[theorem]{Remark}
\newtheorem{assump}[theorem]{Assumption}
\newcommand\la{\lambda}
\newcommand{\ipl}{\langle}
\newcommand{\ipr}{\rangle}
\newcommand\summ{\textstyle\sum\limits}
\newcommand\T{\textstyle}
\newcommand\D{\displaystyle}
\newcommand\norm[1]{\|#1\|}
\newcommand\R{\mathbb{R}}
\newcommand\N{\mathbb{N}}
\DeclareMathOperator{\argmin}{arg\, min}
\begin{document}
% ========================================================================

\title{On inertial iterated Tikhonov methods for solving ill-posed problems}
\setcounter{footnote}{1}

\author{
J.C.~Rabelo%
\thanks{Department of Mathematics, Federal Univ.\,of Piaui,
        64049-550 Teresina, Brazil}
%       \href{mailto:joelrabelo@ufpi.edu.br}{\tt joelrabelo@ufpi.edu.br}.}
\and
A.~Leit\~ao%
\thanks{\mbox{EMAp, Getulio Vargas Fundation, Praia de Botafogo 190,
        22250-900 Rio de Janeiro, Brazil}}
\thanks{\mbox{On leave from Department of Mathematics, Federal Univ.\,of
        St.\,Catarina, 88040-900 Floripa, Brazil}} $^\P$
\and
A.\,L.\,Madureira%
\thanks{LNCC, Av.\,Get\'ulio Vargas 333, P.O.\,Box\,95113,
        25651-070 Petr\'opolis, Brazil}
%       \href{mailto:alm@lncc.br}{\tt alm@lncc.br}.}
}

\date{\small \today}

\maketitle

\vspace{-0.8cm}
\begin{abstract}
In this manuscript we propose and analyze an implicit two-point type method
(or inertial method) for obtaining stable approximate solutions to linear
ill-posed operator equations. The method is based on the iterated
Tikhonov (iT) scheme.
We establish convergence for exact data, and stability and semi-convergence
for noisy data. Regarding numerical experiments we consider: i) a 2D Inverse
Potential Problem, ii) an Image Deblurring Problem; the computational efficiency
of the method is compared with standard implementations of the iT method.
\end{abstract}

{\let\thefootnote\relax\footnotetext{Emails:
\href{mailto:joelrabelo@ufpi.edu.br}{\tt joelrabelo@ufpi.edu.br}, \
\href{mailto:acgleitao@gmail.com}{\tt acgleitao@gmail.com}, \
\href{mailto:alm@lncc.br}{\tt alm@lncc.br}.}}

\noindent {\small {\bf Keywords.}
Ill-posed problems; Two-point methods; Inertial methods; Iterated Tikhonov method.}
\medskip

\noindent {\small {\bf AMS Classification:} 65J20, 47J06.}
\bigskip

\noindent
\begin{minipage}[t]{0.6\textwidth}
% \raggedright
{\noindent\small This manuscript is dedicated to Professor Johann
Baumeister (Oberzeitldorn, Bavaria,\,7.\,March\,1944) on the occasion
of his 80th birthday and in recognition of his valuable contributions
to Inverse Problems, Control\,Theory, Numerical Analysis and Variational
Calculus.

He is the author of one of the first books on Inverse Problems
\cite{Bau87}, published by Vieweg \& Sohn in 1987, which influenced a
whole generation of mathematicians.\,A zealous adviser, his door has
always been open to his students,
welcoming them with attention and patience.

Johann Baumeister is currently Emeritus Professor at the Department of
Mathematics of the Johann-Wolfgang von Goethe Universit\"at, Frankfurt
am Main, Germany.}
\end{minipage}
\hfill\noindent
\begin{minipage}[t]{0.32\textwidth}
% \raggedleft
\vskip-0.2cm
\centerline{\includegraphics[width=\textwidth]{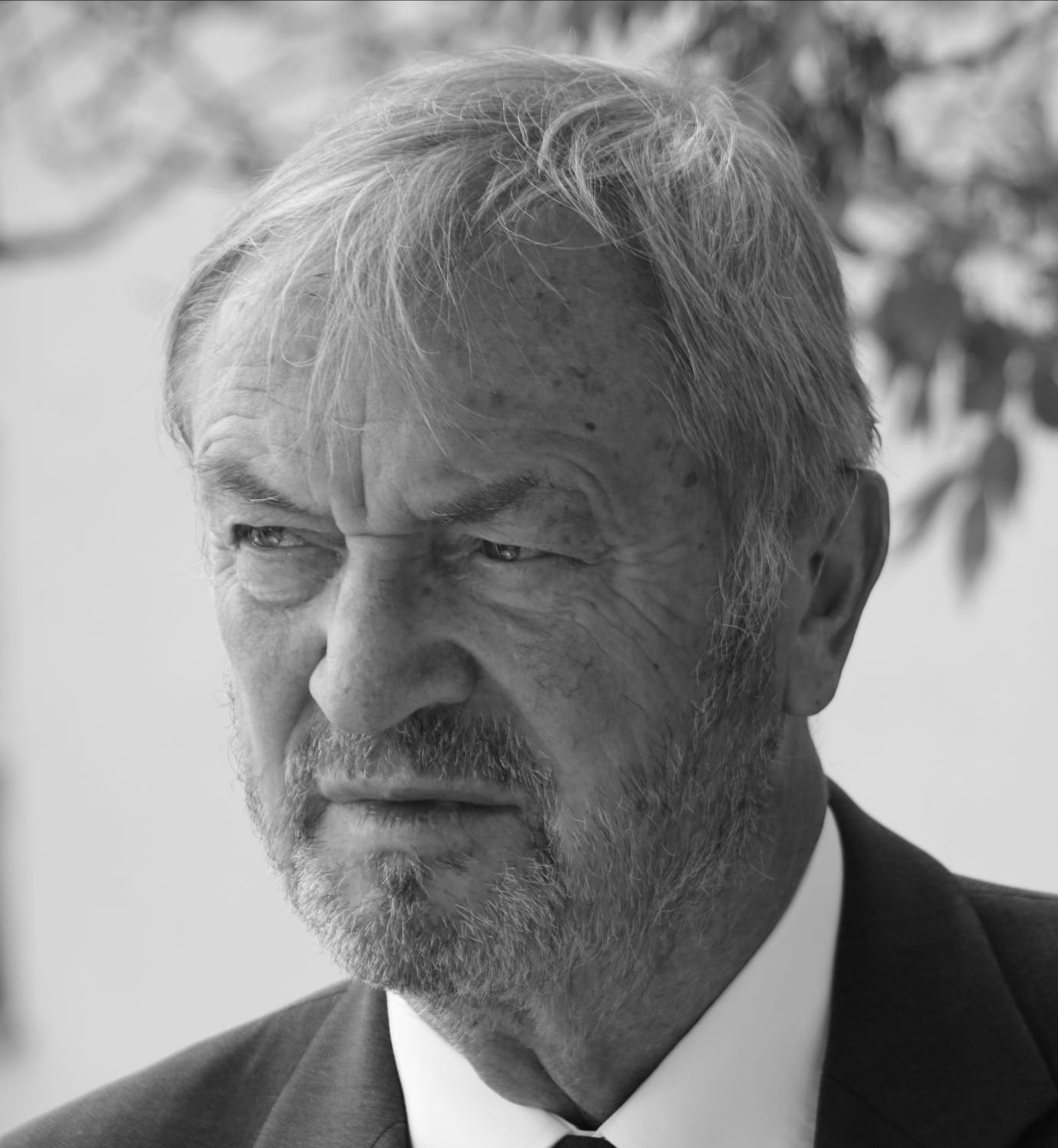}}
\end{minipage}

% ------------------------------------------------------------------------
\section{Introduction} \label{sec:intro}

\subsection*{Problems of interest}

In a typical \emph{inverse problem} setting~\cite{Bau87,Kir96,Rie03},
let $X$ and $Y$ be Hilbert spaces, and consider the problem of determining
an unknown quantity $x \in X$ from given data $y \in Y$, i.e. an unknown
quantity of interest $x$ (which cannot be directly measured) has to be
identified, based on information obtained from some set of measured data $y$.

A relevant point is that, in practice, the exact data $y \in Y$ is unavailable.
Instead, only approximate measured data $y^\delta \in Y$ satisfying
\begin{equation}\label{eq:noisy-i}
    \norm{ y^\delta - y } \ \le \ \delta \, ,
\end{equation}
is accessible. Here, the known $\delta > 0$ represents the level of
noise (or uncertainty in the measurements). The available noisy data
$y^\delta \in Y$ are obtained by indirect measurements of $x \in X$;
a process represented by the model
\begin{equation}\label{eq:inv-probl}
    A\,x \ = \ y^\delta ,
\end{equation}
where $A: X \to Y$, is a bounded linear ill-posed operator, whose inverse
$A^{-1} : Y \to X$ either does not exist, or is not continuous.

\subsection*{Standard iterations}

We recall two families of iterative methods for obtaining
stable approximate solutions to the linear ill-posed operator equation
\eqref{eq:inv-probl}, namely (I) the family of explicit iterative methods
defined by
$$
x_{k+1}^\delta \ = \ x_k^\delta - \gamma_k A^*(A x_k^\delta - y^\delta)
\, ,\ k = 0, 1, \dots \, ;
$$
(II) the family of implicit iterative methods
$$
x_{k+1}^\delta \ = \ x_k^\delta - \la_k A^*(A x_{k+1}^\delta - y^\delta)
\, ,\ k = 0, 1, \dots
$$
Here $A^*: Y \to X$ is the adjoint operator to $A$. The iterations start
at a given initial guess $x_0 \in X$ (which may incorporate {\em a priori}
information about the exact solution of $Ax = y$).

Both families of methods, defined by choices of $\gamma_k$ and $\la_k$,
can be interpreted as iterative schemes for solving the normal equation
$A^*A x = A^* y^\delta$, which is the optimally condition for the least
square problem $\min_x \norm{Ax - y^\delta}^2$ \cite{Bau87}.

In the explicit schemes, step computations are not expensive and the
parameters $\gamma_k$ play the role of step size control.
Appropriate choices of $\gamma_k$ lead to different methods, e.g., the Landweber
iteration \cite{Lan51}, the steepest descent method \cite{Sch96}, the minimal
error method \cite{EngHanNeu96} or generalizations of these methods \cite{LS18}.
Explicit iterative type methods are extensively discussed in the inverse problems
literature. Their regularization properties are well established for both linear
and nonlinear operator equations \cite{Bau87, EngHanNeu96, Kir96, KalNeuSch08}.
It is well known in particular that the above mentioned Landweber, steepest
descent and minimal error methods present slow convergence rates
\cite{EngHanNeu96, KalNeuSch08}.

On the other hand, each step of the implicit methods (also known as iterated
Tikhonov (iT) methods \cite{HG98, BLS20} or proximal point (PP) methods
\cite{Mar70, Roc76}) corresponds to computing  $x_{k+1}^\delta :=
\argmin_x \big\{ \la_k \| Ax - y^\delta \|^2 + \| x - x_k^\delta \|^2 \big\}$.
The parameters $\la_k$ are appropriately chosen Lagrange multipliers \cite{BLS20}.
Here, the computation of the iterative step is more demanding than in the
explicit type methods, since $x_{k+1}^\delta$ is obtained by solving the
linear system $(\la_k A^*A + I) x = x_k^\delta + \la_k A^* y^\delta$
at each step. However, the iT type methods require less iterations
than the explicit methods, to compute an approximate solution of similar
quality \cite{EngHanNeu96, KalNeuSch08}.

The literature on iT type methods is extensive. Various aspects are
investigated, including regularization properties \cite{Eng87, GS00, KN08,
KalNeuSch08}, convergence rates \cite{HG98, Sch93a}, {\em a posteriori}
strategies for choosing the Lagrange multipliers \cite{BLS20}, and a
cyclic version of the iT method \cite{CBL11}.

\subsection*{Two-point iterations}

Two-point iteration schemes can be interpreted as a generalization of the
above explicit/im\-plicit methods.
In 1983 Nesterov discussed in \cite{Nes83} a strategy to accelerate the
convergence of explicit methods (I), where the computation of $x_{k+1}^\delta$ depends on
the last two iterates $x_k^\delta$ and $x_{k-1}^\delta$. In the notation of
\eqref{eq:inv-probl} the Nesterov accelerated forward-backward scheme reads
\begin{equation} \label{eq:nesterov}
x_{k+1}^\delta \ = \ w_k^\delta - \gamma A^*(A w_k^\delta - y^\delta) \, ,
\quad {\rm where} \quad
w_k^\delta \ = \ x_k^\delta + \ \alpha_k (x_k^\delta - x_{k-1}^\delta) \, ,
\ k \geq 0 \, .
\end{equation}
Here $x_0^\delta \in X$ is an initial guess, $x_ {-1}^\delta = x_0^\delta$,
$\alpha_k = (k-1)/(k-1+\alpha)$ with $\alpha \geq 3$, and $\gamma$ is a
scaling parameter.
The calculation of $x_k$ and $w_k$ is explicit, and as expensive to compute
as the step of the (explicit) Landweber method.
This explicit two-point iteration scheme improves the theoretical rate
of convergence for the functional values $\norm{A x_k - y^\delta}^2$
from the standard ${\cal O}(1/k)$ to ${\cal O}(1/k^2)$ \cite{Nes83};
in 2016, Attouch and Peypouquet \cite{AP16} where able to improve this rate
to $o(1/k^2)$ for $\alpha > 3$.

Explicit two-point schemes are successfully considered in the context
of {\em Iterative Shrinkage-Thresholding Algorithms} (ISTAs); among the
resulting explicit two-point methods we mention the {\em Fast ISTA}
(or FISTA), considered in 2009 by Beck and Teboulle \cite{BeTe09},
and the {\em Two-Step ISTA} (or TwIST) proposed in 2007 by Bioucas-Dias
and Figueiredo \cite{BF07}. These methods are designed for solving linear
inverse problems arising in image processing and image restoration
(a large amount of inertial schemes for optimization and inverse problems
is available in the literature; we refer to the monograph of Chambolle
and Pock \cite{CaPo16}, the book of Beck \cite{Be17} and the references
therein).

More recently, the Nesterov accelerated scheme was considered as
an alternative for obtaining stable solutions to ill-posed operator
equations.
In 2017, Neubauer considered the Nesterov scheme with a stopping rule
(the discrepancy principle) and proved convergence rates under standard
source conditions \cite{Neu17}.
In 2021 Kindermann revisited the Nesterov scheme for linear ill-posed
problems \cite{Ki21} and proved that this explicit two-point method is
an optimal-order iterative regularization method.
In 2017 Hubmer and Ramlau \cite{HR17} considered the Nesterov scheme
(with discrepancy principle) for nonlinear ill-posed problems (under
the Scherzer condition \cite[Eq.~(11.6)]{EngHanNeu96}). Convergence for
exact data is proven as well as semi-convergence in the noisy data case.
The same authors considered in \cite{HR18} the Nesterov scheme for
nonlinear ill-posed problems with a locally convex residual functional.
\medskip

The implicit two-point iterative schemes are known in the optimization
literature under the name of {\em inertial proximal methods} and have
been well analyzed by many authors over the last two decades (see,
e.g., \cite{Al03, AP16, OCBP14, MO03}).
To the best of our knowledge, implicit two-point type methods have
not yet been considered in the inverse problems literature, and this
manuscript represents an attempt to do so.

\subsection*{The inertial iterated Tikhonov method}

In this article we propose and analyze an implicit two-point iteration
scheme that can be interpreted as a generalization of the iT method;
our iteration relates to the inertial method proposed in 2001 by Alvarez
and Attouch \cite{AA01}.
We propose this implicit two-point method as a viable alternative for
computing stable approximate solutions to the ill-posed operator
equation \eqref{eq:inv-probl}, and investigate its numerical efficiency.

The method under consideration consists in choosing appropriate
non-negative sequences $(\alpha_k)$, $(\la_k)$, and defining at
each iterative step the extrapolation
$w_k^\delta := x_k^\delta + \alpha_k (x_k^\delta - x_{k-1}^\delta)$;
the next iterate $x_{k+1}$ is defined by
\begin{equation} \label{eq:iit-step}
x_{k+1}^\delta \ := \
\argmin_x \big\{ \la_k \| Ax - y^\delta \|^2 + \| x - w_k^\delta \|^2 \big\}
\, , \ k = 0, 1, \dots
\end{equation}
where $x_{-1} = x_0\in X$ are given. For obvious reasons we refer to this
implicit two-point method as {\em inertial iterated Tikhonov} (iniT) method.
\medskip

The outline of the manuscript is as follows:
In Section~\ref{sec:iteration} we introduce and analyze the inertial
iterated Tikhonov (iniT) method, proving convergence for exact data in
Section~\ref{ssec:2.2}, and presenting stability and semi-convergence
results in Section~\ref{ssec:2.3}.
In Section~\ref{sec:numerics} the iniT method is tested in a two-dimensional
Inverse Potential Problem and an Image Deblurring Problem.
Section~\ref{sec:conclusions} is devoted to final remarks an conclusions.
% In Appendix~\ref{sec:app}

% ------------------------------------------------------------------------
\section{The iteration} \label{sec:iteration}

In this section we introduce and analyze the inertial iterated Tikhonov
(iniT) method considered in this manuscript. 
In Section~\ref{ssec:2.1} the iniT method is presented and some basic
inequalities are established. Convergence for exact data is proven in
Section~\ref{ssec:2.2}. Stability and semi-convergence results are proven
in Section~\ref{ssec:2.3}.

\subsection{Description of method} \label{ssec:2.1}

We begin by addressing the exact data case $y^\delta = y$. Define the
quadratic (square residual) functional $f: X \ni x \to f(x) :=
\frac12 \norm{Ax-y}^2 \in \R^+$.

Denoting the current iterate by $x_k \in X$, for $k \geq 0$, the step
of the proposed iniT method consists in two parts: first compute the
extrapolation point $w_k \in X$,
\begin{subequations} \label{def:iniT}
\begin{equation} \label{def:iniT-w}
w_k \ := \ x_k + \alpha_k \, (x_k - x_{k-1}) \, ;
\end{equation}
in the sequel, the next iterate $x_{k+1} \in X$ is defined as the
solution of
\begin{equation} \label{def:iniT-x}
\la_k \nabla f(x_{k+1}) + x_{k+1} - w_k \ = \ 0 \, .
\end{equation}
\end{subequations}
Notice that \eqref{def:iniT-x} is equivalent to
$\la_k \nabla f(x_{k+1}) + (x_{k+1}-x_k) - \alpha_k (x_k-x_{k-1}) = 0$,
i.e. \eqref{def:iniT-x} cor\-responds to an inertial proximal point update
(compare with the iterative step of the inertial proximal method in
\cite[Equation~(${\cal A}_1$)]{AA01}).

Here $x_0 \in X$ plays the role of an initial guess and $x_{-1} := x_0$.
Moreover, $(\alpha_k) \in [0,\alpha)$ for some $\alpha \in (0,1)$, and
$(\la_k) \in \R^+$ are given sequences.
Notice that, if $\alpha_k \equiv 0$ then \eqref{def:iniT-x} corresponds to the
standard iT iteration for exact data, i.e. $x_{k+1}$ is defined as the solution
of $\la_k \nabla f(x_{k+1}) + x_{k+1} - x_k = 0$.

It is straightforward to see that \eqref{def:iniT-x} is equivalent to $G_k \, x_{k+1}
= \la_k A^* y + w_k$, where $G_k := (\la_k A^* A + I): X \to X$ is a positive
definite operator with spectrum contained in the interval $[1, 1 + \la_k\norm{A}^2]$.
Consequently, the iterate $x_{k+1}$ is uniquely defined by \eqref{def:iniT-x}.

In what follows we present the iniT method in algorithmic form

\begin{algorithm}[h!]
\begin{center}
\fbox{\parbox{13.5cm}{
$[0]$ choose an initial guess $x_0 \in X$; \ set $x_{-1} := x_0$; \ $k := 0$;
\medskip

$[1]$ choose $\alpha \in [0,1)$ \ and \ $(\la_k)_{k\geq0} \in \R^+$;
\medskip

$[2]$ {\bf while \ $\norm{A x_k - y} > 0$ \ do} 
\smallskip

\ \ \ \ \ $[2.1]$ choose $\alpha_k \in [0,\alpha]$;
\smallskip

\ \ \ \ \ $[2.2]$ $w_k := x_k + \alpha_k (x_k - x_{k-1})$;
\smallskip

\ \ \ \ \ $[2.3]$ compute $x_{k+1} \in X$ as the solution of
\smallskip

\centerline{$\la_k \nabla f(x_{k+1}) + x_{k+1} - w_k \ = \ 0$;}
\smallskip

\ \ \ \ \ $[2.4]$ $k := k+1$;
\smallskip

\ \ \ \ \ {\bf end while}

% $[2]$ {\bf for \ $k \geq 0$ \ do} 
% \smallskip
% 
% \quad \ $[2.1]$ {\bf if \ $\norm{A x_k - y} > 0$ \ then}
% 
% \quad \ \ \ \ \ \ \ \ \ \ \ \ choose $\alpha_k \in [0,\alpha]$;
% 
% \quad \ \ \ \ \ \ \ \ \ \ \ \ $w_k := x_k+\alpha_k(x_k-x_{k-1})$;
% 
% \quad \ \ \ \ \ \ \ \ {\bf else}
% 
% \quad \ \ \ \ \ \ \ \ \ \ \ \ $\alpha_k = 0$;
% 
% \quad \ \ \ \ \ \ \ \ \ \ \ \ $w_k := x_k$;
% 
% \quad \ \ \ \ \ \ \ \ {\bf end if}
% \smallskip
% 
% \quad \ $[2.2]$ compute $x_{k+1} \in X$ as the solution of
% 
% \centerline{$\la_k \nabla f(x_{k+1}) + x_{k+1} - w_k \ = \ 0$;}
% 
% \quad \ {\bf end for}

\smallskip
} }
\end{center} \vskip-0.5cm
\caption{Inertial iterated Tikhonov method (iniT) in the exact data case.}
\label{alg:init-exact}
\end{algorithm}

% \begin{remark} \label{rem:station}
% In Algorithm~\ref{alg:init-exact}, if $A x_{k_0} = y$ for some $k_0 \in \N$,
% the extrapolation step \eqref{def:iniT-w} is not applied (i.e. $\alpha_{k_0} = 0$
% and  $w_{k_0} = x_{k_0}$). Thus, it follows from Step~[2.2] that $x_{k_0+1}
% = x_{k_0}$. Consequently, the iteration becomes stationary with $x_k = x_{k_0}$
% and $\alpha_k=0$, for $k \geq k_0$.
% \end{remark}

\begin{remark} \label{rem:station}
In Algorithm~\ref{alg:init-exact}, if $A x_{k_0} = y$ for some $k_0 \in \N$,
the iteration stops after computing $x_{-1}, x_0, \dots, x_{k_0}$ \ and \
$w_0, \dots, w_{k_0-1}$.
Notice that Algorithm~\ref{alg:init-exact} generates (infinite) sequences
$(x_k)_{k\in\N}$ and $(w_k)_{k\in\N}$ if and only if $A x_k \not= y$, for
all $k \in \N$.
\end{remark}

% The remaining of this subsection is devoted to establishing preliminary
% results and estimates (see Lemma~\ref{lemma:aux} and Proposition~%
% \ref{prop:aux}) related to sequences $(x_k)$, $(w_k)$ generated by
% Algorithm~\ref{alg:init-exact}.

The remaining of this subsection is devoted to investigating preliminary
properties of the sequences $(x_k)$, $(w_k)$ generated by Algorithm~%
\ref{alg:init-exact} (see Lemma~\ref{lemma:aux} and Proposition~\ref{prop:aux}).

\begin{lemma} \label{lemma:aux}
Let $(x_k)$, $(w_k)$ be sequences generated by Algorithm~\ref{alg:init-exact}.
% with $(\alpha_k)$ defined as in Step~[2.1].
Given $x \in X$, it holds
\begin{equation} \label{eq:w}
\norm{w_k - x}^2 \, = \, (1+\alpha_k) \norm{x_k - x}^2 - \alpha_k
\norm{x_{k-1} - x}^2 + \alpha_k (1+\alpha_k) \norm{x_k - x_{k-1}}^2 ,
\ k = 0, 1, \dots
\end{equation}
\end{lemma}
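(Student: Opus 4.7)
The proof is a direct algebraic identity following from the definition of $w_k$. The plan is to expand $\|w_k - x\|^2$ using $w_k - x = (x_k - x) + \alpha_k(x_k - x_{k-1})$, and then rewrite the resulting cross term via the polarization identity so that the expression takes the claimed form.

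First, I would use \eqref{def:iniT-w} to write $w_k - x = (x_k - x) + \alpha_k(x_k - x_{k-1})$ and expand the square in the Hilbert space $X$:
\begin{equation*}
\norm{w_k - x}^2 \ = \ \norm{x_k - x}^2 + 2\alpha_k \ipl x_k - x,\, x_k - x_{k-1}\ipr + \alpha_k^2 \norm{x_k - x_{k-1}}^2.
\end{equation*}
Thus the lemma reduces to verifying
\begin{equation*}
2\,\ipl x_k - x,\, x_k - x_{k-1}\ipr \ = \ \norm{x_k - x}^2 - \norm{x_{k-1} - x}^2 + \norm{x_k - x_{k-1}}^2.
\end{equation*}

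For this, I would apply the polarization identity $2\ipl a, b\ipr = \norm{a}^2 + \norm{b}^2 - \norm{a-b}^2$ with $a := x_k - x$ and $b := x_{k-1} - x$, noting that $a - b = x_k - x_{k-1}$. Writing $x_k - x_{k-1} = (x_k - x) - (x_{k-1} - x)$ and distributing the inner product produces exactly the required identity. Substituting back yields
\begin{equation*}
\norm{w_k - x}^2 = \norm{x_k-x}^2 + \alpha_k\bigl(\norm{x_k-x}^2 - \norm{x_{k-1}-x}^2 + \norm{x_k-x_{k-1}}^2\bigr) + \alpha_k^2 \norm{x_k-x_{k-1}}^2,
\end{equation*}
which after collecting terms is precisely \eqref{eq:w}.

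There is no real obstacle here: the argument is a one-line polarization computation, and no use of the operator $A$, of $\la_k$, or of \eqref{def:iniT-x} is needed. The formula holds for every $x \in X$ and every $\alpha_k \in \R$, so one does not even need to invoke $\alpha_k \in [0,\alpha]$. The only minor care is to keep the signs straight when splitting $x_k - x_{k-1} = (x_k - x) - (x_{k-1} - x)$ inside the inner product.
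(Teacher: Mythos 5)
Your proof is correct. It differs mildly in route from the paper's: you expand $\norm{w_k - x}^2$ forwards from $w_k - x = (x_k - x) + \alpha_k(x_k - x_{k-1})$ and dispose of the cross term with the polarization identity, whereas the paper inverts \eqref{def:iniT-w} to write $x_k$ as the convex combination $(1+\alpha_k)^{-1} w_k + \alpha_k(1+\alpha_k)^{-1} x_{k-1}$, applies the strong convexity identity $\norm{pz + (1-p)w}^2 = p\norm{z}^2 + (1-p)\norm{w}^2 - p(1-p)\norm{z-w}^2$ to $\norm{x_k - x}^2$, substitutes $w_k - x_{k-1} = (1+\alpha_k)(x_k - x_{k-1})$, and then solves for $\norm{w_k - x}^2$. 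The two computations are algebraically equivalent, but yours is slightly more direct (no need to invert the defining relation or divide by $1+\alpha_k$) and, as you note, valid for arbitrary real $\alpha_k$, while the paper's convex-combination reading requires the weight $(1+\alpha_k)^{-1}$ to lie in $(0,1]$ and is tailored to the regime $\alpha_k \geq 0$ actually used by the algorithm. Nothing is gained or lost in substance; both are one-line Hilbert space identities.
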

\begin{proof}
From \eqref{def:iniT-w} follows
\begin{equation} \label{eq:xk}
x_k \ = \ (1+\alpha_k)^{-1} w_k + \alpha_k (1+\alpha_k)^{-1} x_{k-1}
\, , \ k = 0, 1, \dots
\end{equation}
Consequently,
\begin{equation} \label{eq:wk}
w_k - x_{k-1} \ = \ (1 + \alpha_k) (x_k - x_{k-1}) \, , \ k = 0, 1, \dots
\end{equation}
Arguing with \eqref{eq:xk}, together with the identity $(1+\alpha_k)^{-1} + \alpha_k
(1+\alpha_k)^{-1} = 1$ and the strong convexity of the functional $\norm{\cdot}^2$,%
\footnote{For $0 < p < 1$ and $z$, $w \in X$ it holds $\norm{pz + (1-p)w}^2
= p\norm{z}^2 + (1-p)\norm{w}^2 - p(1-p)\norm{z-w}^2$.}
we obtain
\begin{eqnarray*}
\norm{x_k - x}^2
& = & \norm{(1+\alpha_k)^{-1} (w_k-x) + \alpha_k (1+\alpha_k)^{-1} (x_{k-1}-x)}^2
      \\
& = & (1+\alpha_k)^{-1} \norm{w_k - x}^2 + \alpha_k (1+\alpha_k)^{-1}
      \norm{x_{k-1} - x}^2
      \\
&   & - \, \alpha_k (1+\alpha_k)^{-2} \norm{w_k - x_{k-1}}^2 \, ,
\end{eqnarray*}
for $k \geq 0$.
To complete the proof, it is enough to substitute \eqref{eq:wk} in the
last identity.
% The lemma follows now substituting \eqref{eq:wk} in this last identity.
\end{proof}

\begin{assump} \label{ass:alpha}
Given $\alpha\in[0,1)$ and a convergent series $\sum_k \theta_k$ of
nonnegative terms, let 
$$
\alpha_0 = \alpha \quad {\rm and} \quad
\alpha_{k} :=
        \left\{ \begin{array}{cl}
          \min\left\{\dfrac{\theta_k}
                {\|x_k-x_{k-1}\|^2} , \theta_k , \alpha \right\}
                & , \ {\rm if } \ \norm{x_k-x_{k-1}} > 0 \\[2ex]
          0     & , \ {\rm otherwise}
        \end{array} \right.
\, , \ k \geq 1 \, . 
$$
\end{assump}

\begin{remark} \label{rem:alpha}
Assumption~\ref{ass:alpha} implies the summability of the series
$\sum_{k\geq0} \alpha_k \| x_k - x_{k-1} \|^2$.

To prove the next proposition, an auxiliary result is needed (see
Appendix~A). In order to apply this result, the summability of the
above mentioned series is required.
\end{remark}

\begin{propo} \label{prop:aux}
Let $(x_k)$, $(w_k)$ be sequences generated by Algorithm~\ref{alg:init-exact},
with $(\la_k)$, $(\alpha_k)$ defined as in Steps~[1] and [2.1] respectively.
The following assertions hold true
\medskip

\noindent
(a) If $x^*\in X$ is a solution of $Ax = y$ then
$$
\norm{w_k - x^*}^2 - \norm{x_{k+1} - x^*}^2 =
\norm{\la_k A^*(A x_{k+1} - y)}^2 + 2 \la_k \norm{Ax_{k+1}-y}^2
\, , \ k = 0, 1, \dots
$$

\noindent
Additionally, if $(\alpha_k)$ satisfies Assumption~\ref{ass:alpha}, it holds
\medskip

\noindent
(b) The sequences $(x_k)$ and $(w_k)$ are bounded.
\medskip

\noindent
(c) The series
\begin{equation} \label{eq:series}
\summ_{k=0}^{\infty} \la_k \norm{Ax_{k+1} - y}^2 , \ \
\summ_{k=0}^{\infty} \norm{\la_k A^*(Ax_{k+1}-y)}^2 , \ \
\summ_{k=0}^{\infty} \norm{x_{k+1} - w_k}^2 , \ \
\summ_{k=0}^{\infty} \norm{x_{k+1} - x_k}^2
\end{equation}
are summable.
\end{propo}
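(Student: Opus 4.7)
The plan is to first establish (a) by direct algebraic manipulation, then combine it with Lemma~\ref{lemma:aux} to obtain a three-term inequality for $\phi_k := \|x_k - x^*\|^2$, and finally invoke the auxiliary lemma in Appendix~A to conclude boundedness and summability.

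For part (a), the update rule \eqref{def:iniT-x} rewrites as $w_k - x_{k+1} = \la_k A^*(Ax_{k+1} - y)$, since $\nabla f(x) = A^*(Ax-y)$. Applying the elementary identity $\|u+v\|^2 - \|v\|^2 = \|u\|^2 + 2\langle u, v\rangle$ with $u = w_k - x_{k+1}$ and $v = x_{k+1} - x^*$, then using the adjoint relation and $Ax^* = y$, the inner product term simplifies to $2\la_k \|A x_{k+1} - y\|^2$ and (a) follows.

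To prove (b), I would substitute Lemma~\ref{lemma:aux} into the left-hand side of (a), producing
$$
\phi_{k+1} - \phi_k - \alpha_k(\phi_k - \phi_{k-1}) + d_k \ = \ \alpha_k(1+\alpha_k)\|x_k - x_{k-1}\|^2,
$$
where $d_k := \|\la_k A^*(Ax_{k+1} - y)\|^2 + 2\la_k \|A x_{k+1} - y\|^2 \ge 0$. Since $\alpha_k \le \alpha < 1$ and Assumption~\ref{ass:alpha} gives $\alpha_k \|x_k - x_{k-1}\|^2 \le \theta_k$, the right-hand side is dominated by $2\theta_k$, so
$$
\phi_{k+1} - \phi_k \ \le \ \alpha\,(\phi_k - \phi_{k-1}) + 2\theta_k.
$$
This matches the hypothesis of the auxiliary lemma announced in Remark~\ref{rem:alpha}, which delivers $\sum_k [\phi_k - \phi_{k-1}]_+ < \infty$ and hence boundedness of $(\phi_k)$ and of $(x_k)$. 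From $\|w_k - x_k\|^2 = \alpha_k^2\|x_k - x_{k-1}\|^2 \le \alpha\,\theta_k$, boundedness of $(w_k)$ follows, completing~(b).

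For (c), summing the displayed equality from $k=0$ to $N$ and using the already established boundedness of $\phi_k$, together with $\sum_k \alpha_k(\phi_k - \phi_{k-1}) \le \alpha \sum_k [\phi_k - \phi_{k-1}]_+ < \infty$ and $\sum_k \alpha_k(1+\alpha_k)\|x_k - x_{k-1}\|^2 \le 2\sum_k \theta_k < \infty$, yields $\sum_k d_k < \infty$; this handles the first two series in \eqref{eq:series}. The third series equals the second by \eqref{def:iniT-x}. For the fourth, the splitting $x_{k+1} - x_k = (x_{k+1} - w_k) + \alpha_k(x_k - x_{k-1})$ together with $\alpha_k^2\|x_k - x_{k-1}\|^2 \le \alpha\theta_k$ gives $\|x_{k+1} - x_k\|^2 \le 2\|x_{k+1} - w_k\|^2 + 2\alpha\theta_k$, which is summable. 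The main obstacle is the invocation of the Appendix~A lemma: it is the mechanism that converts the three-term inequality into boundedness, and it crucially requires both $\alpha_k \le \alpha < 1$ and the summability built into Assumption~\ref{ass:alpha}; once that step is granted, the remainder reduces to routine bookkeeping.
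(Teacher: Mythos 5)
Your proof is correct and follows essentially the same route as the paper: part (a) via the same expansion of $\|w_k-x\|^2-\|x_{k+1}-x\|^2$, part (b) by combining (a) with Lemma~\ref{lemma:aux} to get the three-term recursion for $\varphi_k=\|x_k-x^*\|^2$ and invoking Lemma~\ref{lem:appA}, and part (c) by summation together with the splitting $x_{k+1}-x_k=(x_{k+1}-w_k)+\alpha_k(x_k-x_{k-1})$. The only cosmetic difference is in (c): you control $\sum_k\alpha_k(\varphi_k-\varphi_{k-1})$ via the summability of the positive parts $[\varphi_k-\varphi_{k-1}]_+$ (a fact established inside the proof of Lemma~\ref{lem:appA}, though not in its statement), whereas the paper uses $\alpha_k\le\theta_k$ from Assumption~\ref{ass:alpha} to bound $\alpha_k(\varphi_k-\varphi_{k-1})\le\alpha_k\varphi_k\le M\theta_k$ and telescopes --- both are legitimate.
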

\begin{proof}
From \eqref{def:iniT-x} follows
\begin{eqnarray*}
\| w_k - x \|^2 - \| x_{k+1} - x\|^2
& = & \| w_k - x_{k+1} \|^2
  + 2 \ipl w_k - x_{k+1} , x_{k+1} - x \ipr
\\
& = & \| \la_kA^*(Ax_{k+1}-y) \|^2
  + 2 \la_k \ipl A x_{k+1} - y , A x_{k+1} - Ax \ipr \, ,
\end{eqnarray*}
for $x \in X$ and $k \geq 0$.
Assertion~(a) is a direct consequence of this equation with $x = x^*$.
\medskip

\noindent {\bf Assertion (b):}
Define $\varphi_k := \norm{x_k-x^*}^2$, for $k \geq -1$, \ and \
$\eta_k := \alpha_k \norm{x_k - x_{k-1}}^2$, for $k \geq 0$.
It follows from Assertion~(a) and \eqref{eq:w} with $x = x^*$ that
\begin{multline} \label{eq:gamma}
\varphi_{k+1} - \varphi_k + \norm{\la_k A^*(A x_{k+1} - y)}^2
    + 2 \la_k \norm{A x_{k+1} - y}^2 \ = \\
\alpha_k (\varphi_k - \varphi_{k-1})
    + \alpha_k (1+\alpha_k) \norm{x_k - x_{k-1}}^2 , \ k \geq 0 \, .
\end{multline}
Arguing with \eqref{eq:gamma} and the fact that
$\alpha_k \leq \alpha < 1$, we obtain
\begin{equation} \label{eq:gamma2}
\varphi_{k+1} - \varphi_k \ < \
\alpha_k (\varphi_k - \varphi_{k-1}) + 2 \eta_k \, , \ k \geq 0 \, .
\end{equation}

Now, the summability of $\sum_{k\geq0} \eta_k$ (see
Remark~\ref{rem:alpha}) together with \eqref{eq:gamma2}, allow us to
apply Lemma~\ref{lem:appA} to the sequences $(\alpha_k)$,
$(\varphi_k)$, $(\eta_k)$.
This lemma guarantees the existence of $\bar\varphi \in \R$ s.t.
$\varphi_k \to \bar\varphi$ as $k \to \infty$. Consequently, the
boundedness of $(x_k)$ follows.
The boundedness of $(w_k)$ follows from the one of $(x_k)$, together
with \eqref{def:iniT-w} and the fact that $\alpha_k \in [0,\alpha]$.
\medskip

\noindent {\bf Assertion (c):}
We verify the summability of the four series in \eqref{eq:series}.
From \eqref{eq:gamma} follows
\begin{center}
$\norm{\la_k A^*(A x_{k+1} - y)}^2 + 2 \la_k \norm{A x_{k+1} - y}^2
\ < \ \alpha_k \varphi_k + (\varphi_k - \varphi_{k+1})
    + 2 \eta_k \, , \ k \geq 0 \, .$
\end{center}
Adding up this inequality for $k = 0, \ldots, l$  we obtain
\begin{eqnarray}
\D\summ_{k=0}^l \norm{\la_k A^*(A x_{k+1} - y)}^2 +
2 \summ_{k=0}^l \la_k \norm{A x_{k+1} - y}^2
&   <  & \summ_{k=0}^l \alpha_k \varphi_k + (\varphi_0 - \varphi_{l+1})
         + 2 \summ_{k=0}^l \eta_k \nonumber \\
& \leq & M \summ_{k=0}^l \theta_k + \varphi_0 + 2 \summ_{k=0}^l \eta_k \, ,
         \label{eq:series-aux}
\end{eqnarray}
where $(\theta_k)$ is the sequence in Assumption~\ref{ass:alpha} and
$M = \sup_k \varphi_k$ (since $\varphi_k \to \bar\varphi$ as $k \to \infty$,
it holds $M < \infty$).
Taking the limit $l\to\infty$ in \eqref{eq:series-aux} we obtain the
summability of the first two series in \eqref{eq:series}.

The summability of $\sum_k \norm{x_{k+1} - w_k}^2$, the third series
in \eqref{eq:series}, is a consequence of \eqref{def:iniT-x} and the
fact that $\sum_k \norm{\la_k A^*(Ax_{k+1} - y)}^2 < \infty$ (the second
series in \eqref{eq:series}).

Since $\alpha_k < 1$, we argue with \eqref{def:iniT-w} to estimate
\begin{center}
$\norm{x_{k+1} - x_k}^2 \, = \,
\norm{x_{k+1} - w_k + \alpha_k (x_k-x_{k-1})}^2 \, \leq \,
2 \norm{x_{k+1} - w_k}^2 + 2 \norm{x_k - x_{k-1}}^2 , \ k \geq 0 \, .$
\end{center}
Summing up this inequality we obtain
\begin{center}
$\D\summ_{k\geq0} \norm{x_{k+1} - x_k}^2
\ \leq \
2 \summ_{k\geq0} \norm{x_{k+1} - w_k}^2 + 2 \summ_{k\geq0} \eta_k
\ < \ \infty \, ,$
\end{center}
establishing the summability of the last series in \eqref{eq:series}.
\end{proof}

\subsection{The exact data case} \label{ssec:2.2}

In what follows we prove a (strong) convergence result for the iniT method
(Algorithm~\ref{alg:init-exact}) in the case of exact data.
\newpage

\begin{remark} \label{rem:min}
It is well known that problem $Ax = y$ admits an $x_0$-minimal norm solution
\cite{EngHanNeu96}, i.e. there exists an element $x^\dag \in X$ satisfying
$\norm{x^\dag - x_0} = \inf \{ \norm{x - x_0} ; \ Ax = y \}$; notice that
$x^\dag$ is the only solution of \eqref{eq:inv-probl} with this property.
\end{remark}

\begin{remark} \label{rem:range}
It holds $x_k - x_{k-1} \in Rg(A^*)$,%
\footnote{Here $Rg(A^*)$ denotes the range of the operator $A^*$.}
for all $k \in \N$.
Indeed, for $k = 0$ it holds $x_0 - x_{-1} = 0 = A^*(0)$; assume that
$x_j - x_{j-1} \in Rg(A^*)$ holds true for $j = 0, \dots, k$;
it follows from \eqref{def:iniT-w} and \eqref{def:iniT-x} that
$x_{k+1} - x_k = \alpha_k (x_k - x_{k-1}) - \la_k A^*(Ax_{k+1} - y)
\in Rg(A^*)$.
\end{remark}

\begin{theorem}[Convergence for exact data] \label{th:conv}
Let $(x_k)$, $(w_k)$ be sequences generated by Algorithm~\ref{alg:init-exact},
with $(\la_k)$, $(\alpha_k)$ defined as in Steps~[1] and [2.1] respectively.
Moreover, assume that:

% (1) \ $\sum_{k\geq1} \alpha_{k-1} \norm{x_k - x_{k-1}}^2 < \infty$;

(A1) \ The sequence $(\alpha_k)$ satisfies Assumption~\ref{ass:alpha};

(A2) \ $(\alpha_k)$ is monotone non-increasing and $\alpha < \frac12$
    (see Step~[1] of Algorithm~\ref{alg:init-exact});

(A3) \ $\la_k \geq \la > 0$, for $k \geq 0$.

\noindent Then either the sequence $(x_k)$ stops after a finite number
$k_0 \in \N$ of steps (in this case it holds $A x_{k_0} = y$), or it
converges strongly to $x^\dag$ the $x_0$-minimal norm solution of $Ax=y$.
\end{theorem}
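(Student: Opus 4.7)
The plan is to first mine Proposition~\ref{prop:aux} for enough a priori information to extract a weak cluster point of $(x_k)$, then use the range-space restriction of Remark~\ref{rem:range} to identify that cluster point as $x^\dag$, deduce weak convergence of the entire sequence, and finally upgrade weak convergence to strong convergence.

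From the first series in Proposition~\ref{prop:aux}(c) together with assumption (A3), one gets $\sum_k \|Ax_{k+1}-y\|^2 < \infty$, hence $\|Ax_k - y\| \to 0$ (the finite-termination alternative is built into the while-loop of Algorithm~\ref{alg:init-exact}). Combined with the boundedness of $(x_k)$ from Proposition~\ref{prop:aux}(b), a standard weak compactness argument provides a subsequence $x_{k_j} \rightharpoonup \tilde x$ in $X$; weak-to-weak continuity of the bounded operator $A$ then forces $A\tilde x = y$, so $\tilde x$ is a solution of \eqref{eq:inv-probl}. To identify $\tilde x$ with $x^\dag$, I would invoke Remark~\ref{rem:range}: $x_k - x_0 \in Rg(A^*) \subseteq N(A)^\perp$ for every $k$, and since $N(A)^\perp$ is a closed subspace (hence weakly closed), one has $\tilde x - x_0 \in N(A)^\perp$. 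Combining this with the standard characterization $x^\dag - x_0 \in N(A)^\perp$ of the $x_0$-minimal norm solution (Remark~\ref{rem:min}) yields $\tilde x - x^\dag \in N(A)^\perp$; since $\tilde x$ and $x^\dag$ both solve $Ax = y$, also $\tilde x - x^\dag \in N(A)$, and therefore $\tilde x = x^\dag$. As every weakly convergent subsequence of the bounded sequence $(x_k)$ shares this limit, the whole sequence converges weakly, $x_k \rightharpoonup x^\dag$.

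The main obstacle is the upgrade from weak to strong convergence. From the proof of Proposition~\ref{prop:aux}(b) the numbers $\varphi_k := \|x_k - x^\dag\|^2$ converge to some $\bar\varphi \geq 0$, and the task reduces to showing $\bar\varphi = 0$; weak convergence together with a convergent norm $\|x_k - x^\dag\|$ is famously not enough by itself to force strong convergence in a Hilbert space, so this step must exploit further structure of the iniT recursion. I expect it to rest crucially on the sharper constraint (A2) $\alpha < \frac12$ together with the monotonicity of $(\alpha_k)$ (neither of which is needed for Proposition~\ref{prop:aux}). Concretely, I would revisit the one-step identity in Proposition~\ref{prop:aux}(a) combined with Lemma~\ref{lemma:aux} to derive a telescoped estimate in which a positive multiple of $(\varphi_k - \bar\varphi)$ is bounded above by tail sums of the summable series in \eqref{eq:series}; letting $k \to \infty$ would then give $\bar\varphi \leq 0$ and hence $\|x_k - x^\dag\| \to 0$. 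The factor $(1-2\alpha) > 0$ supplied by (A2) is the natural candidate to produce the strict contraction required to close this argument.
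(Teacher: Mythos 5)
Your first paragraph (weak subsequential limits solve $Ax=y$, lie in $x_0+\mathcal{N}(A)^\perp$ by Remark~\ref{rem:range}, hence coincide with $x^\dag$, so $x_k\rightharpoonup x^\dag$) is correct, but it is not where the difficulty lies, and the paper does not take this route. The theorem claims \emph{strong} convergence, and your plan for the weak-to-strong upgrade has a genuine gap. You propose to show $\bar\varphi=0$ for $\varphi_k=\|x_k-x^\dag\|^2$ by telescoping the one-step identity of Proposition~\ref{prop:aux}(a) combined with Lemma~\ref{lemma:aux}. But that identity, with the fixed reference point $x^\dag$, only controls the \emph{differences} $\varphi_{k+1}-\varphi_k$ up to summable terms; telescoping it yields bounds on $\varphi_l-\varphi_m$, and since both tend to the same limit $\bar\varphi$ this collapses to $0\le 0$ and says nothing about the value of $\bar\varphi$. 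The factor $(1-2\alpha)>0$ from (A2) does not rescue this: it multiplies distances between \emph{iterates}, not the gap $\varphi_k-\bar\varphi$. Weak convergence plus convergence of $\|x_k-x^\dag\|$ is, as you yourself note, insufficient, and your sketch supplies no mechanism that actually forces the limit norm to vanish.

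The missing idea in the paper's proof is to replace the fixed reference point $x^\dag$ by a \emph{moving} one: the iterates $x_{l_j}$ of minimal residual so far, i.e.\ $\|Ax_{l_j}-y\|\le\|Ax_k-y\|$ for $k\le l_j$ (see \eqref{eq:min}). With $z=x_{l_j}$ the troublesome cross term $2\la_k\ipl Ax_{k+1}-y,\,y-Az\ipr$ is bounded by $2\la_k\|Ax_{k+1}-y\|\,\|Ax_{l_j}-y\|\le 2\la_k\|Ax_{k+1}-y\|^2$, which is summable by Proposition~\ref{prop:aux}(c); this is exactly what fails for a generic solution $z$ of $Ax=y$, where one only gets $\ipl Ax_{k+1}-y, y-Az\ipr=\ipl Ax_{k+1}-y,0\ipr$ — fine for $x^\dag$ but then one is back to the telescoping dead end above. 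Combining the resulting bound \eqref{eq:z} with \eqref{eq:xlj} and the monotonicity of $(\alpha_k)$ from (A2) gives $(1-2\alpha_{m-1})\|x_m-x_{l_j}\|^2\le$ tail sums of summable series, i.e.\ $\|x_m-x_{l_j}\|^2\le s_m$ with $s_m\to0$, whence $(x_k)$ is Cauchy and converges \emph{strongly}; identification of the limit as $x^\dag$ then proceeds essentially as in your first paragraph. This is also where (A2) is genuinely used, via $\beta=(1-2\alpha_0)^{-1}>0$. Without the minimal-residual reference points your argument cannot be closed as written.
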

\begin{proof}
We have to consider two cases.

\noindent \textbf{Case I:} \ $A{x}_{k_0}= y$ for some $k_0 \in \mathbb{N}$.
\\
In this case, as observed in Remark~\ref{rem:station}, the sequence $(x_k)$
generated by Algorithm~\ref{alg:init-exact} reads $x_{-1}, x_0, \dots, x_{k_0}$,
and it holds $A x_{k_0} = y$.
\medskip

\noindent \textbf{Case II:} \ $A{x}_k\neq y$, for every $k \geq 0$.
\\
Notice that, in this case, the sequence $\big(\norm{A x_k - y}\big) \in \R$ is
strictly positive.
Moreover, it follows from Assumption~(A3) together with Proposition~\ref{prop:aux}~(c)
that $\lim_k \norm{A x_k - y} = 0$.
Therefore, there exists a strictly monotone increasing sequence $(l_j) \in \mathbb{N}$
satisfying
\begin{equation} \label{eq:min}
\|A{x}_{l_j}-y\|\leq \|A{x}_{k}-y\|\,\,\,\, \mbox{for} \,\,\, 0\leq k \leq l_j.
\end{equation}
Next, given $k > 0$ and $z \in X$, we estimate
\begin{eqnarray*}
\|w_k  -  z\|^2 - \|x_{k+1} - z\|^2 &\! = &\! 
      - \norm{x_{k+1} - 
 w_k}^2 - 2 \ipl x_{k+1} - w_k , w_k - z \ipr
      \nonumber \\
&\!\!\! \leq &
      - 2 \ipl x_{k+1} - w_k , w_k - z \ipr
      \nonumber \\
&\!\!\! = &
      2 \ipl \la_k A^*( A x_{k+1} - y) , w_k - z \ipr
      \nonumber \\
&\!\!\! = &
      2 \la_k \ipl A x_{k+1} - y , A(w_k - x_{k+1}) + (A x_{k+1}-y) + (y-Az) \ipr
      \nonumber \\
&\!\!\! \leq &
      2 \ipl \la_k A^* (A x_{k+1} - y) , w_k - x_{k+1} \ipr
      \ + \ 2 \la_k \norm{A x_{k+1}-y}^2 \nonumber \\
& &   + \ 2 \la_k \norm{A x_{k+1}-y} \ \norm{Az-y}
      \nonumber \\
&\!\!\! \leq &
      \norm{\la_k A^*(A x_{k+1} - y)}^2 + \norm{w_k - x_{k+1}}^2
      + 2 \la_k \norm{A x_{k+1} - y}^2 \nonumber \\
& &   + \ 2 \la_k \norm{A x_{k+1}-y} \ \norm{Az-y} .
\end{eqnarray*}
Taking $z = x_{l_j}$ in the last inequality and arguing with \eqref{eq:min},
we obtain
\begin{multline} \label{eq:z}
\norm{w_k - x_{l_j}}^2 - \norm{x_{k+1} - x_{l_j}}^2
\ \leq \\ \norm{\la_k A^*(A x_{k+1} - y)}^2  +  \norm{w_k - x_{k+1}}^2
          + 4 \la_k \norm{A x_{k+1} - y}^2 \ =: \ \mu_k \, ,
\end{multline}
for $0 \leq k \leq l_j - 1$.
On the other hand, we conclude from \eqref{eq:w} (with $x = x_{l_j}$) that
\begin{equation} \label{eq:xlj}
\norm{w_k - x_{l_j}}^2 \ \geq \ \norm{x_k - x_{l_j}}^2
+ \alpha_k \big( \norm{x_k - x_{l_j}}^2 - \norm{x_{k-1} - x_{l_j}}^2 \big)
\, , \ k \geq 0 \, .
\end{equation}
Now, combining \eqref{eq:xlj} with \eqref{eq:z}, and arguing with Assumption
(A2), we conclude that
\begin{eqnarray}
\norm{x_k - x_{l_j}}^2 - \norm{x_{k+1} - x_{l_j}}^2
& \leq &
  \alpha_k \big( \norm{x_{k-1} - x_{l_j}}^2 - \norm{x_k - x_{l_j}}^2 \big) + \mu_k
  \nonumber \\
& \leq &
  \alpha_{k-1} \norm{x_{k-1} - x_{l_j}}^2 - \alpha_k \norm{x_k - x_{l_j}}^2
  + \mu_k \, , \label{eq:txlj}
\end{eqnarray}
for $0 \leq k \leq l_j - 1$.

Let $0 \leq m \leq l_j - 1$. Adding up \eqref{eq:txlj} for $k = m, \ldots, l_j-1$
gives us
$$
\norm{x_m - x_{l_j}}^2 - \norm{x_{l_j} - x_{l_j}}^2
\ \leq \ \alpha_{m-1} \norm{x_{m-1} - x_{l_j}}^2
       - \alpha_{l_j-1} \norm{x_{l_j-1} - x_{l_j}}^2
       + \summ_{k=m}^{l_j-1} \mu_k \, ,
$$
from where we derive
\begin{eqnarray*}
\norm{x_m - x_{l_j}}^2
% & \leq &
%   \norm{x_{l_j} - x_{l_j}}^2 + \alpha_{m-1} \norm{x_{m-1} - x_{l_j}}^2
%   - \alpha_{l_j-1} \norm{x_{l_j-1} - x_{l_j}}^2 + \summ_{k=m}^{l_j-1} \mu_k
%   \nonumber \\
& \leq &
  \alpha_{m-1} \norm{x_{m-1} - x_{l_j}}^2 + \summ_{k=m}^{l_j-1} \mu_k
  \nonumber \\
& \leq &
  2 \alpha_{m-1} \big( \norm{x_{m-1} - x_m}^2 + \norm{x_m - x_{l_j}}^2 \big)
  + \summ_{k=m}^{l_j} \mu_k
  \nonumber\\
& \leq &
  2 \summ_{k=m}^{l_j} \alpha_{k-1} \norm{x_{k-1} - x_k}^2
  + 2\alpha_{m-1} \norm{x_m - x_{l_j}}^2 + \summ_{k=m}^{l_j} \mu_k \, .
\end{eqnarray*}
Consequently, whenever $m < l_j$, it holds
$$
(1-2\alpha_{m-1}) \norm{x_m - x_{l_j}}^2 \ \leq \
2 \summ_{k=m}^{l_j} \alpha_{k-1} \norm{x_k - x_{k-1}}^2 +
\summ_{k=m}^{l_j} \mu_k \, .
$$
Now, defining $\beta := (1-2\alpha_0)^{-1}$, we argue with Assumption~(A2)
to conclude that
\begin{eqnarray} \label{eq:number17}
\norm{x_m - x_{l_j}}^2 &\leq&
  2 \beta \summ_{k=m}^\infty \alpha_{k-1} \norm{x_k - x_{k-1}}^2
  + \beta \summ_{k=m}^\infty \mu_k
  \nonumber \\
& \leq &  \beta \summ_{k=m}^\infty  \norm{x_k - x_{k-1}}^2
        + \beta \summ_{k=m}^\infty \mu_k \, , \ m < l_j
\end{eqnarray}
(notice that $\beta > 0$ due to Assumption (A2)).

Notice that Assumption (A1) together with Proposition~\ref{prop:aux}~(c)
guarantee the summability of both series $\sum_k \mu_k$ and
$\sum_k \norm{x_k - x_{k-1}}^2$.
Therefore, defining $s_m := \beta \sum_{k\geq m}  \norm{x_k - x_{k-1}}^2 + \beta
\sum_{k\geq m} \mu_k$, for $m \in \N$, we have $s_m \to 0$ as $m \to \infty$.

Now let $n > m$ be given. Choosing $l_j > n$ and arguing with \eqref{eq:number17}
we estimate
$$
\norm{x_n - x_m} \ \leq \
\norm{x_n - x_{l_j}} + \norm{x_{l_j} - x_m} \ \leq \
\sqrt{s_n} + \sqrt{s_m} \ \leq \ 2 \sqrt{s_m} \, .
$$
Since $\lim_m s_m = 0$, this inequality allow us to conclude that $(x_k)$ is
a Cauchy sequence.

Consequently, $(x_k)$ converges to some $\overline{x} \in X$.
In order to prove that $\overline{x}$ is a solution of $A x = y$, it suffices
to verify that $\norm{A x_k - y} \to 0$ as $k \to \infty$. This fact, however,
is a consequence of Proposition~\ref{prop:aux}~(c) (see first series in
\eqref{eq:series}) together with Assumption~(A3).

It follows from Remark \ref{rem:range} that $x_{k+1} - x_k \in Rg(A^\ast)
\subset \mathcal{N}(A)^{\perp}$. An inductive argument allow us to conclude
that $\overline{x} \in x_0+\mathcal{N}({A})^{\perp}$.
Thus, from Remark~\ref{rem:min} follows $\overline{x} = x^\dag$.
\end{proof}

\subsection{The noise data case} \label{ssec:2.3}

In what follows we address the noisy data case $\delta > 0$. We begin by
defining the quadratic (square residual) functional $f^\delta: X \ni x \to
f^\delta(x) := \frac12 \norm{Ax-y^\delta}^2 \in \R^+$. The iniT method in the
case noisy data case reads:

\begin{algorithm}[h!]
\begin{center}
\fbox{\parbox{13.5cm}{
$[0]$ choose an initial guess $x_0^\delta \in X$;
      \ set $x_{-1}^\delta := x_0^\delta$; \ $k := 0$;
\medskip

$[1]$ choose $\tau > 1$, \ $\alpha \in [0,1)$ \ and \ $(\la_k)_{k\geq0} \in \R^+$;
\medskip

$[2]$ {\bf while \ $\norm{A x_k^\delta - y^\delta} > \tau \delta$ \ do} 
\smallskip

\ \ \ \ \ $[2.1]$ choose $\alpha_k^\delta \in [0,\alpha]$;
\smallskip

\ \ \ \ \ $[2.2]$ $w_k^\delta := x_k^\delta + \alpha_k^\delta (x_k^\delta-x_{k-1}^\delta)$;
\smallskip

\ \ \ \ \ $[2.3]$ compute $x_{k+1}^\delta \in X$ as the solution of
\smallskip

\centerline{$\la_k \nabla f^\delta(x_{k+1}^\delta) + x_{k+1}^\delta - w_k^\delta \ = \ 0$;}
\smallskip

\ \ \ \ \ $[2.4]$ $k := k+1$;
\smallskip

\ \ \ \ \ {\bf end while}
\medskip

$[3]$ $k^* = k$;
\smallskip
}}
\end{center} \vskip-0.5cm
\caption{Inertial iterated Tikhonov method (iniT) in the noisy data case.}
\label{alg:init-noise}
\end{algorithm}

The stopping criterion used in Algorithm~\ref{alg:init-noise} is based on the
discrepancy principle, i.e. the iteration is stopped at step $k^* =
k^*(\delta, y^\delta)$ satisfying
$$
k^* := \min\{ k\in\N \ ; \ \norm{Ax_k^\delta - y^\delta} \leq \tau\delta \} ,
$$
where $\tau > 1$.
Notice that Algorithm~\ref{alg:init-noise} generate sequences
$(x_k^\delta)_{k=-1}^{k^*}$ and $(w_k^\delta)_{k=0}^{k^*-1}$. In
Proposition~\ref{prop:kstar} we prove that, under appropriate assumptions,
the stopping index $k^*$ in Step~[3] is finite.

In Lemma~\ref{lem:remon} the residuals $\norm{A w_k^\delta - y^\delta}$ and
$\norm{A x_{k+1}^\delta - y^\delta}$ are compared; and in Lemma~\ref{lem:gnoise}
the distances $\norm{w_k^\delta-x^*}$ and $\norm{x_{k+1}^\delta-x^*}$ are
compared (here $x^* \in X$ is a solution of $Ax = y$).

\begin{lemma} \label{lem:remon}
Let $(x_k^\delta)$, $(w_k^\delta)$ be sequences generated by Algorithm~%
\ref{alg:init-noise}. The following assertions hold true
\smallskip
% (a) for $k = 0 , \dots, k^*-1$ we have
% $$
% \norm{A w_k^\delta - y^\delta}^2 - \norm{A x_{k+1}^\delta - y^\delta}^2 =
% \norm{A w_k^\delta - Ax_{k+1}^\delta}^2
% + 2 \ipl A(w_k^\delta - x_{k+1}^\delta) , A x_{k+1}^\delta - y^\delta \ipr;
% $$

\noindent
(a) $\norm{A w_k^\delta - y^\delta}^2 - \norm{A x_{k+1}^\delta - y^\delta}^2
= \norm{A w_k^\delta - Ax_{k+1}^\delta}^2
+ 2 \ipl A(w_k^\delta - x_{k+1}^\delta) , A x_{k+1}^\delta - y^\delta \ipr$,
for $k = 0 , \dots, k^*-1$;
\medskip

\noindent
(b) $\norm{A x_{k+1}^\delta - y^\delta} \leq \norm{A w_k^\delta - y^\delta}$,
for $k = 0, \dots, k^*-1$.
\end{lemma}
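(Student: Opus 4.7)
The plan is to first establish (a) as a purely algebraic identity in the Hilbert space $Y$, then derive (b) by showing the right-hand side of (a) is nonnegative, using the defining equation \eqref{def:iniT-x} for $x_{k+1}^\delta$.

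For assertion (a), I would set $u := A w_k^\delta - y^\delta$ and $v := A x_{k+1}^\delta - y^\delta$, so that $u - v = A(w_k^\delta - x_{k+1}^\delta)$. The standard identity
\begin{equation*}
\|u\|^2 - \|v\|^2 \ = \ \|u-v\|^2 + 2 \ipl u-v, v \ipr
\end{equation*}
(obtained by expanding $\|u\|^2 = \|(u-v)+v\|^2$) is exactly the claim. This step is essentially bookkeeping.

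For assertion (b), the key is to rewrite the inner product term in (a) using the update rule. Step~[2.3] of Algorithm~\ref{alg:init-noise} gives $w_k^\delta - x_{k+1}^\delta = \la_k \nabla f^\delta(x_{k+1}^\delta) = \la_k A^*(A x_{k+1}^\delta - y^\delta)$. Substituting this into the inner product in (a) yields
\begin{equation*}
2 \ipl A(w_k^\delta - x_{k+1}^\delta), A x_{k+1}^\delta - y^\delta \ipr
\ = \ 2 \la_k \ipl A A^*(A x_{k+1}^\delta - y^\delta), A x_{k+1}^\delta - y^\delta \ipr
\ = \ 2 \la_k \norm{A^*(A x_{k+1}^\delta - y^\delta)}^2 \geq 0.
\end{equation*}
Combined with the obviously nonnegative squared norm $\|A w_k^\delta - A x_{k+1}^\delta\|^2$, this makes the right-hand side of (a) nonnegative, from which (b) follows immediately.

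There is no real obstacle here: both parts are short manipulations. The only thing to watch is using the correct adjoint identity $\langle AA^* z, z\rangle = \|A^* z\|^2$ and remembering that $\la_k > 0$, so the sign of the inner product term is controlled without any assumption on the inertial parameters $\alpha_k^\delta$. This monotonicity of the residual from the extrapolation point to the next iterate is exactly what one expects from the implicit (Tikhonov) nature of the update, and it will presumably be used later to control the residual at $x_{k+1}^\delta$ in terms of residuals at $x_k^\delta$ and $x_{k-1}^\delta$.
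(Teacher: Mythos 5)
Your proposal is correct and follows essentially the same route as the paper: expand the squared norms to get the identity in (a), then use Step~[2.3] to rewrite $w_k^\delta - x_{k+1}^\delta = \la_k A^*(A x_{k+1}^\delta - y^\delta)$ and conclude that the inner product equals $2\la_k \norm{A^*(A x_{k+1}^\delta - y^\delta)}^2 \geq 0$. No gaps.
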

\begin{proof}
For $k \leq k^*$ it follows from Step~[2.3] of Algorithm~\ref{alg:init-noise} that
\begin{multline*}
\norm{A w_k^\delta - y^\delta}^2 - \norm{A x_{k+1}^\delta - y^\delta}^2
\ = \
\norm{A w_k^\delta - Ax_{k+1}^\delta}^2
+ 2 \ipl A(w_k^\delta - x_{k+1}^\delta) , A x_{k+1}^\delta - y^\delta \ipr \\
\geq \
\ipl w_k^\delta - x_{k+1}^\delta , A^*(A x_{k+1}^\delta - y^\delta) \ipr
\ = \
\la_k \norm{A^*(A x_{k+1}^\delta - y^\delta)}^2 \ \geq \ 0 \, ,
\end{multline*}
proving assertions (a) and (b).
\end{proof}

\begin{lemma} \label{lem:gnoise}
Let $(x_k^\delta)$, $(w_k^\delta)$ be sequences generated by Algorithm~%
\ref{alg:init-noise}. If $x^*\in X$ is a solution of $A x = y$ then
$$
\norm{w_k^\delta - x^*}^2 - \norm{x_{k+1}^\delta - x^*}^2
\ \geq \ \norm{\la_k A^*(Ax_{k+1}^\delta - y^\delta)}^2
     + 2 \la_k \norm{A x_{k+1}^\delta - y^\delta}
     \Big[ \norm{A x_{k+1}^\delta - y^\delta} - \delta \Big],
$$
for $k = 0, \dots, k^*-1$.
\end{lemma}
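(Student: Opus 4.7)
The plan is to reduce the inequality to the polarization (three-point) identity applied to $w_k^\delta$, $x_{k+1}^\delta$, $x^*$, and then to use the optimality condition in Step~[2.3] to convert the inner-product terms into quantities involving the residual $Ax_{k+1}^\delta - y^\delta$. The only place where the noise level $\delta$ enters is through a single Cauchy–Schwarz estimate that produces the negative term $-\delta$ appearing in the bracket on the right-hand side.

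First, I would write the elementary identity
\[
\norm{w_k^\delta - x^*}^2 - \norm{x_{k+1}^\delta - x^*}^2
\;=\; \norm{w_k^\delta - x_{k+1}^\delta}^2
      + 2 \ipl w_k^\delta - x_{k+1}^\delta , x_{k+1}^\delta - x^* \ipr .
\]
Next, Step~[2.3] of Algorithm~\ref{alg:init-noise} reads
$w_k^\delta - x_{k+1}^\delta = \la_k A^*(A x_{k+1}^\delta - y^\delta)$,
so substituting this expression into both terms converts the right-hand side into
\[
\norm{\la_k A^*(A x_{k+1}^\delta - y^\delta)}^2
      + 2 \la_k \ipl A x_{k+1}^\delta - y^\delta , A(x_{k+1}^\delta - x^*) \ipr .
\]
Using $A x^* = y$ rewrites $A(x_{k+1}^\delta - x^*) = (A x_{k+1}^\delta - y^\delta) + (y^\delta - y)$, splitting the inner product into a $\norm{A x_{k+1}^\delta - y^\delta}^2$ contribution and a cross term $\ipl A x_{k+1}^\delta - y^\delta , y^\delta - y \ipr$.

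The final step is the only estimate in the argument: by Cauchy–Schwarz together with \eqref{eq:noisy-i},
\[
\ipl A x_{k+1}^\delta - y^\delta , y^\delta - y \ipr
\;\ge\; -\, \norm{A x_{k+1}^\delta - y^\delta}\, \norm{y^\delta - y}
\;\ge\; -\, \delta\, \norm{A x_{k+1}^\delta - y^\delta}.
\]
Plugging this lower bound back in and factoring $2 \la_k \norm{A x_{k+1}^\delta - y^\delta}$ out of the remaining two terms yields exactly the claimed inequality. There is no real obstacle here — the argument is purely algebraic, and the key choice is simply to introduce $(y^\delta - y)$ via the identity $A x^* = y$ so that the noise bound can be invoked.
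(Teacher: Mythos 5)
Your proposal is correct and follows essentially the same route as the paper: the three-point identity, substitution of $w_k^\delta - x_{k+1}^\delta = \la_k A^*(Ax_{k+1}^\delta - y^\delta)$ from Step~[2.3], the decomposition $A(x_{k+1}^\delta - x^*) = (Ax_{k+1}^\delta - y^\delta) + (y^\delta - y)$, and a single Cauchy--Schwarz estimate with \eqref{eq:noisy-i}. No discrepancies to report.
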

\begin{proof}
Let $0 \leq k \leq k^*-1$.
From Step~[2.3] of Algorithm~\ref{alg:init-noise} and \eqref{eq:noisy-i} follow
\begin{eqnarray*}
\|w_k^\delta - x^*\|^2 \!\!\! & \!\!\! - \!\!\! & \!\!\! \|x_{k+1}^\delta-x^*\|^2
 =
  \|w_k^\delta - x_{k+1}^\delta\|^2
  + 2 \ipl w_k^\delta - x_{k+1}^\delta , x_{k+1}^\delta-x^* \ipr \\
& = &
  \| \la_k A^*(Ax_{k+1}^\delta - y^\delta) \|^2
  + 2\la_k \ipl Ax_{k+1}^\delta - y^\delta , Ax_{k+1}^\delta - y
  + y^\delta - y^\delta \ipr \\
& = &
  \| \la_k A^*(Ax_{k+1}^\delta - y^\delta) \|^2
  + 2\la_k \| Ax_{k+1}^\delta - y^\delta\|^2
  + 2\la_k \ipl Ax_{k+1}^\delta - y^\delta , y^\delta - y \ipr \\
& \geq &
  \| \la_k A^*(Ax_{k+1}^\delta - y^\delta) \|^2
  + 2\la_k \| Ax_{k+1}^\delta - y^\delta \|
  \big[ \|Ax_{k+1}^\delta - y^\delta \| - \delta \big] \, ,
\end{eqnarray*}
proving the assertion.
\end{proof}

The next assumption considers a particular choice of the inertial parameter
$\alpha_k^\delta$ in Step~[2.1] of Algorithm~\ref{alg:init-noise}.
It plays a key role in the forthcoming analysis (see Proposition~%
\ref{prop:kstar} and Theorems~\ref{th:stabil} and~\ref{th:semi-conv}).

\begin{assump} \label{ass:alpha-noise}
Given $\alpha \in [0,1)$, define $\alpha_k^\delta$ in Step~[2.1] of
Algorithm~\ref{alg:init-noise} by:
\begin{equation} \label{eq:alphakdelta}
\alpha_0^\delta = \alpha
\quad {\it and} \quad
\alpha^{\delta}_{k} \ := \
        \left\{ \begin{array}{cl}
          \min\left\{ \dfrac{\theta_k}{\|x_k^\delta - x_{k-1}^\delta\|^2} , \
          \theta_k , \ \alpha \right\}
                 & , \ {\rm if } \ \norm{x_k^\delta - x_{k-1}^\delta} > 0 \\[2ex]
          0      & , \ {\rm otherwise}
        \end{array} \right.
\end{equation}
for $k \geq 1$.
Here $(\theta_k)$ is a nonnegative sequence such that $\sum_k \theta_k < \infty$.
\end{assump}

In the sequel we state and prove the main results of this section, namely stability
(Theorem~\ref{th:stabil}) and regularization (Theorem~\ref{th:semi-conv}). First,
however, in the next two remarks we extend to the noisy data case some results
discussed in Lemma~\ref{lemma:aux} and Proposition~\ref{prop:aux}.

\begin{remark} \label{rem:aux-noise}
Arguing as in Lemma~\ref{lemma:aux} we prove that the sequences $(x_k^\delta)$,
$(w_k^\delta)$ generated by Algorithm~\ref{alg:init-noise} (with $(\alpha_k^\delta)$
defined as in Step~[2.1] of this algorithm) satisfy
\begin{equation} \label{eq:w-noise}
\norm{w_k^\delta - x}^2 \ = \ (1+\alpha_k^\delta) \norm{x_k^\delta - x}^2
- \alpha_k^\delta \norm{x_{k-1}^\delta - x}^2
+ \alpha_k^\delta (1+\alpha_k^\delta) \norm{x_k^\delta - x_{k-1}^\delta}^2 ,
\end{equation}
for $x \in X$, and $k = 0, \dots, k^*$.
\end{remark}

\begin{remark} \label{rem:bound-noise}
If the inertial parameters $\alpha_k^{\delta}$ satisfy Assumption~\ref{ass:alpha-noise},
then the sequences $(x_k^\delta)$, $(w_k^\delta)$ generated by Algorithm~%
\ref{alg:init-noise} are bounded.

If $k^*$ in Step~[3] of Algorithm~\ref{alg:init-noise} is finite, the statement
is obvious. Otherwise, define the (infinite) sequences $\varphi_k^\delta :=
\norm{x_k^\delta - x^*}^2$, for $k \geq -1$, and $\eta_k^\delta :=
\alpha_k^\delta \norm{x_k^\delta - x_{k-1}^\delta}^2$, for $k \geq 0$
(here $x^* \in X$ is a solution of $Ax = y$).
Arguing as in the proof of Proposition~\ref{prop:aux}~(b), we apply
Lemma~\ref{lem:appA} to the sequences $(\alpha_k^\delta)$, $(\varphi_k^\delta)$
and $(\eta_k^\delta)$ to conclude that $\varphi_k^\delta$ converges strongly.%
\footnote{Notice that Lemma~\ref{lem:gnoise} is used together with Remark~\ref{rem:aux-noise}
to derive \eqref{eq:gamma} for $x_k^\delta$, $y^\delta$, $\varphi_k^\delta$,
$\alpha_k^\delta$ and $\la_k$; additionally Assumption~\ref{ass:alpha-noise} is used
to guarantee $\sum_k \eta_k^\delta < \infty$.}
The boundedness of $(x_k^\delta)$ follows from this fact.
The boundedness of $(w_k^\delta)$ follows from the one of $(x_k^\delta)$ and
the fact that $\norm{w_k^\delta} = \norm{x_k^\delta + \alpha_k^\delta
(x_k^\delta - x_{k-1}^\delta)}$ for $k \geq 0$.
\end{remark}

\begin{propo} \label{prop:kstar}
Let $(x_k^\delta)$, $(w_k^\delta)$ be sequences generated by Algorithm~%
\ref{alg:init-noise}, and $\alpha_k^\delta$ satisfy Assumption~\ref{ass:alpha-noise}.
If $\sum_k \la_k = \infty$, then the stopping index $k^*$ defined in Step~[3]
is finite.
Moreover, if $\la_k \geq \la > 0$ it holds
$$
k^* \leq \big[ 2 \la \tau \delta^2 (\tau - 1) \big]^{-1}
         \Big( \norm{x_0 - x^*}^2 + M^\delta {\T\sum_k} \, \alpha_k^\delta
         + 2 {\T\sum_k} \, \theta_k \Big) ,
$$
where $x^* \in X$ is a solution of $Ax = y$, $M^\delta := \sup_k \norm{x_k^\delta-x^*}^2$,
and $(\theta_k)$ is the sequence in Assumption~\ref{ass:alpha-noise}.
\end{propo}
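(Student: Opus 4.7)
The plan is to combine Lemma~\ref{lem:gnoise} with the identity (\ref{eq:w-noise}) from Remark~\ref{rem:aux-noise} to obtain a single ``telescoping plus error'' inequality for the squared distances $\varphi_k^\delta := \|x_k^\delta - x^*\|^2$, and then to exploit two ingredients on opposite sides: the discrepancy principle (to lower-bound the residual terms produced by Lemma~\ref{lem:gnoise}) and the summability built into Assumption~\ref{ass:alpha-noise} (to control the accumulated error).

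First I would substitute (\ref{eq:w-noise}) into Lemma~\ref{lem:gnoise} and discard the non-negative term $\|\la_k A^*(A x_{k+1}^\delta - y^\delta)\|^2$ on its right-hand side. Writing $\eta_k^\delta := \alpha_k^\delta \|x_k^\delta - x_{k-1}^\delta\|^2$, this yields, for $k = 0,\dots,k^*-1$,
$$
2\la_k \|Ax_{k+1}^\delta - y^\delta\|\bigl[\|Ax_{k+1}^\delta - y^\delta\| - \delta\bigr] \ \leq\ (\varphi_k^\delta - \varphi_{k+1}^\delta) + \alpha_k^\delta(\varphi_k^\delta - \varphi_{k-1}^\delta) + (1+\alpha_k^\delta)\eta_k^\delta.
$$
Dropping the non-positive contribution $-\alpha_k^\delta \varphi_{k-1}^\delta$, using $\alpha_k^\delta < 1$ and the bounds $\varphi_k^\delta \leq M^\delta$ and $\eta_k^\delta \leq \theta_k$ (both immediate from the definition in Assumption~\ref{ass:alpha-noise}), the right-hand side is further controlled by
$
(\varphi_k^\delta - \varphi_{k+1}^\delta) + \alpha_k^\delta M^\delta + 2\theta_k.
$

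Next, I would use the stopping rule: for each $k$ with $k+1 \leq k^*-1$, the discrepancy principle has not yet been triggered at index $k+1$, so $\|Ax_{k+1}^\delta - y^\delta\| > \tau\delta$, which forces the left-hand side to exceed $2\la_k\,\tau(\tau-1)\delta^2$. Summing the inequality over $k = 0,\dots,k^*-2$, the $\varphi$-differences telescope to at most $\varphi_0^\delta = \|x_0 - x^*\|^2$, and the remaining sums are bounded by $M^\delta \sum_k \alpha_k^\delta + 2\sum_k \theta_k$. This produces
$$
2\tau(\tau-1)\delta^2 \sum_{k=0}^{k^*-2} \la_k \ \leq\ \|x_0 - x^*\|^2 + M^\delta {\T\sum_k}\alpha_k^\delta + 2{\T\sum_k}\theta_k.
$$
Remark~\ref{rem:bound-noise} guarantees $M^\delta < \infty$, and Assumption~\ref{ass:alpha-noise} gives $\sum_k \theta_k < \infty$ and (since $\alpha_k^\delta \leq \theta_k$ for $k \geq 1$) also $\sum_k \alpha_k^\delta < \infty$; hence the right-hand side is a finite constant.

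From here both claims follow. If $k^* = \infty$, the hypothesis $\sum_k \la_k = \infty$ makes the left-hand side diverge, contradicting the finite right-hand side; so $k^*$ is finite. Under $\la_k \geq \la$, the left-hand side is bounded below by $2\la\tau(\tau-1)\delta^2 (k^*-1)$, which after rearrangement yields the stated quantitative bound on $k^*$ (the gap between $k^*-1$ and $k^*$ being absorbed in the inequality). The only delicate point is the bookkeeping around the discrepancy index, since the residual lower bound $\|Ax_{k+1}^\delta - y^\delta\| > \tau\delta$ is available only for $k \leq k^*-2$ and not for $k = k^*-1$; once this is handled, the rest is telescoping and plug-in of the bounds already established in Assumption~\ref{ass:alpha-noise} and Remark~\ref{rem:bound-noise}.
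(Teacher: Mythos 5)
Your proposal is correct and follows essentially the same route as the paper: combine Lemma~\ref{lem:gnoise} with \eqref{eq:w-noise}, drop the nonpositive term $-\alpha_k^\delta\|x_{k-1}^\delta-x^*\|^2$, bound the remainder by $\alpha_k^\delta M^\delta + 2\theta_k$, telescope, and invoke $\sum_k\la_k=\infty$ for the contradiction and $\la_k\ge\la$ for the quantitative bound. The off-by-one bookkeeping you flag (the residual lower bound being unavailable at $k=k^*-1$) is in fact glossed over in the paper as well, which sums up to $l=k^*$; your version is, if anything, the more careful of the two.
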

\begin{proof}
Assume by contradiction that $k^*$ is not finite. It follows from Lemma~\ref{lem:gnoise}
that
$$
2\la_k \tau \delta^2 (\tau-1)
\ \leq \
2 \la_k \norm{Ax_{k+1}^\delta - y^\delta}
\Big[ \norm{Ax_{k+1}^\delta - y^\delta} - \delta \Big]
\ \leq \
\norm{w_k^\delta - x^*}^2 - \norm{x_{k+1}^\delta - x^*}^2 ,
$$
for $k \geq 0$.
From this inequality and \eqref{eq:w-noise} with $x = x^*$ we obtain
\begin{eqnarray} \label{eq:kfinite}
2\la_k \tau \delta^2 (\tau-1)
& \leq &
  (1 + \alpha_k^\delta) \, \norm{x_k^\delta - x^*}^2
  + \alpha_k^\delta (1 + \alpha_k^\delta) \, \norm{x^\delta_{k} - x^\delta_{k-1}}^2
  - \norm{x_{k+1}^\delta - x^*}^2
  \nonumber\\
& \leq &
  \norm{x_k^\delta - x^*}^2 - \norm{x_{k+1}^\delta - x^*}^2
  + \alpha_k^\delta \, \norm{x^\delta_{k} - x^*}^2
  + 2 \alpha_k^\delta \, \norm{x^\delta_{k} - x^\delta_{k-1}}^2 \nonumber\\
& \leq &
  \norm{x_k^\delta - x^*}^2 - \norm{x_{k+1}^\delta - x^*}^2
  + \alpha_k^\delta \, M^\delta + 2 \theta_k \, ,
\end{eqnarray}
for $k \geq 0$ (to obtain the second inequality we used the fact $\alpha_k^\delta
\leq \alpha < 1$). Adding up \eqref{eq:kfinite} for $k = 0, \cdots, l$ we obtain
\begin{equation} \label{eq:kfinite2}
2 \tau (\tau-1) \delta^2 \summ_{k=0}^l \la_k
\ \leq \
\norm{x_0^\delta - x^*}^2 + M^\delta \summ_{k=0}^l \alpha_k^\delta
+ 2 \summ_{k=0}^{l} \theta_k \, .
\end{equation}
Notice that the right hand side of \eqref{eq:kfinite2} is bounded due to
Assumption~\ref{ass:alpha-noise}.
Consequently, due to the assumption $\sum_k \la_k = \infty$, inequality
\eqref{eq:kfinite2} leads to a contradiction when $l \to \infty$, proving
that $k^*$ is finite.

To prove the second assertion, it is enough to take $l = k^*$ in
\eqref{eq:kfinite2} and argue with the additionally assumption $\la_k \geq \la > 0$.
\end{proof}

\begin{theorem}[Stability] \label{th:stabil}
Let $(\delta^j)_j$ be a zero sequence, and $(y^{\delta^j})_j$ be a corresponding
sequence of noisy data satisfying \eqref{eq:noisy-i} for some $y \in Rg(A)$.
For each $j\in\N$, let $(x_l^{\delta^j})_{l=-1}^{k^*_j}$ and
$(w_l^{\delta^j})_{l=0}^{k^*_j-1}$ be sequences generated by
Algorithm~\ref{alg:init-noise}, with $(\alpha_l^{\delta^j})_{l=0}^{k^*_j}$
satisfying Assumption~\ref{ass:alpha-noise} (here $k^*_j =
k^*(\delta^j, y^{\delta^j})$ are the corresponding stopping indices defined
in Step~[3]).
Moreover, let $(x_k)$, $(w_k)$ be the sequences generated by Algorithm~%
\ref{alg:init-exact} (with $(\alpha_k)$ satisfying Assumption~\ref{ass:alpha}).
Then, either the sequences $(x_k)$, $(w_k)$ are not finite and
\begin{equation} \label{eq:stabil}
\lim_{j\to\infty} x_k^{\delta^j} = x_k
\, , \ \ \
\lim_{j\to\infty} w_k^{\delta^j} = w_k \, , \ for \ k = 0 , 1 , \dots
\end{equation}
or the sequences $(x_k) = (x_k)_{k=-1}^{k_0}$, $(w_k) = (w_k)_{k=0}^{k_0-1}$
are finite and it holds
\begin{equation} \label{eq:stabil-f}
\lim_{j\to\infty} x_k^{\delta^j} = x_k \, , \ 0 \leq k \leq k_0
\ \ \ \mbox{and} \ \ \
\lim_{j\to\infty} w_k^{\delta^j} = w_k \, , \ 0 \leq k \leq k_0-1 \, ,
\end{equation}
for some $k_0 \in \N$ (in this case, $x_{k_0} \in X$ is a solution of $Ax = y$).
\end{theorem}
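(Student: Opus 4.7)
The plan is to proceed by induction on the iteration index $k$, taking for granted (as is standard in this kind of stability statement) the initialization $x_0^{\delta^j} = x_0$, so that the base case $x_{-1}^{\delta^j} = x_0^{\delta^j} = x_0 = x_{-1}$ is trivial. For the inductive step one assumes $x_{k-1}^{\delta^j} \to x_{k-1}$ and $x_k^{\delta^j} \to x_k$ and derives first $w_k^{\delta^j} \to w_k$ and then $x_{k+1}^{\delta^j} \to x_{k+1}$. The structural observation driving everything is that, once the inertial parameter $\alpha_k^\delta$ is fixed, the iterate is given by
$$
x_{k+1}^\delta \ = \ (\la_k A^*A + I)^{-1} \big(w_k^\delta + \la_k A^* y^\delta\big),
$$
and $(\la_k A^*A + I)^{-1}$ is a bounded linear operator on $X$, so this map is continuous in both $w_k^\delta$ and $y^\delta$. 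The whole issue therefore reduces to controlling the inertial step $w_k^\delta = x_k^\delta + \alpha_k^\delta(x_k^\delta - x_{k-1}^\delta)$.

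The delicate point is that the prescription \eqref{eq:alphakdelta} for $\alpha_k^\delta$ is discontinuous at $x_k^\delta = x_{k-1}^\delta$, so I would split into two subcases. If $\norm{x_k - x_{k-1}} > 0$, the inductive hypothesis gives $\norm{x_k^{\delta^j} - x_{k-1}^{\delta^j}} > 0$ for $j$ large, and the formula for $\alpha_k^{\delta^j}$ is jointly continuous in its arguments; hence $\alpha_k^{\delta^j} \to \alpha_k$ and $w_k^{\delta^j} \to w_k$ directly. If $\norm{x_k - x_{k-1}} = 0$, then $\alpha_k = 0$ and $w_k = x_k$, but $\alpha_k^{\delta^j}$ itself need not converge to $0$ (the ratio $\theta_k/\norm{x_k^{\delta^j} - x_{k-1}^{\delta^j}}^2$ may blow up). One circumvents this by never tracking $\alpha_k^{\delta^j}$ in isolation: using only the uniform bound $\alpha_k^{\delta^j} \le \alpha$, one obtains
$$
\norm{\alpha_k^{\delta^j}(x_k^{\delta^j} - x_{k-1}^{\delta^j})} \ \le \ \alpha \, \norm{x_k^{\delta^j} - x_{k-1}^{\delta^j}} \ \longrightarrow \ 0,
$$
so $w_k^{\delta^j} \to x_k = w_k$ also in this edge case.

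The remaining task is to verify that $x_{k+1}^{\delta^j}$ is actually produced by Algorithm~\ref{alg:init-noise}, i.e.\ that the stopping index satisfies $k^*_j \ge k+1$ eventually. Whenever $k$ is strictly below the termination index of the exact iteration (this is automatic in the case of \eqref{eq:stabil}, and holds for all $k < k_0$ in the case of \eqref{eq:stabil-f}), we have $\norm{A x_k - y} > 0$; combining $x_k^{\delta^j} \to x_k$ and $y^{\delta^j} \to y$ with $\delta^j \to 0$ gives
$$
\norm{A x_k^{\delta^j} - y^{\delta^j}} \ \longrightarrow \ \norm{A x_k - y} \ > \ 0 \ \ge \ \lim_j \tau \delta^j,
$$
so the stopping criterion fails at step $k$ for $j$ large, ensuring that $x_{k+1}^{\delta^j}$ is computed. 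Plugging $w_k^{\delta^j} \to w_k$ and $y^{\delta^j} \to y$ into the continuous solution map then yields $x_{k+1}^{\delta^j} \to x_{k+1}$, closing the induction and establishing \eqref{eq:stabil}.

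Case~\eqref{eq:stabil-f} follows by running the same induction up to $k = k_0 - 1$: the argument above guarantees $k^*_j \ge k_0$ for $j$ large, so $x_{k_0}^{\delta^j}$ is well defined, and one final application of continuity of $(\la_{k_0-1} A^*A + I)^{-1}$ gives $x_{k_0}^{\delta^j} \to x_{k_0}$. The main obstacle, as outlined, is the subcase $\norm{x_k - x_{k-1}} = 0$, where the inertial parameter itself fails to converge; the trick is to extract convergence of the product $\alpha_k^{\delta^j}(x_k^{\delta^j} - x_{k-1}^{\delta^j})$ from the uniform bound $\alpha_k^{\delta^j} \le \alpha$, rather than from convergence of the factors separately.
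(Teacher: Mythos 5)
Your proposal is correct and follows essentially the same route as the paper's proof: induction on $k$, continuity of the resolvent $(\la_k A^*A+I)^{-1}$ to pass from $w_k^{\delta^j}\to w_k$ to $x_{k+1}^{\delta^j}\to x_{k+1}$, and the same two-case treatment of the inertial parameter (continuity of the $\min$-formula when $x_{k+1}\neq x_k$, versus using only the uniform bound $\alpha_{k+1}^{\delta^j}\le\alpha$ together with $x_{k+1}^{\delta^j}-x_k^{\delta^j}\to 0$ when $x_{k+1}=x_k$). Your explicit check that $k^*_j\ge k+1$ for $j$ large, so that $x_{k+1}^{\delta^j}$ is actually generated, is a detail the paper leaves implicit but is a welcome addition rather than a deviation.
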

\begin{proof}
We present here a proof by induction.
Notice that $w_0^{\delta^j} = w_0 = x_0 = x_0^{\delta^j}$ and $\alpha_0^{\delta^j}
= \alpha_0 = \alpha$, for all $j \in \N$. Thus, \eqref{eq:stabil} holds for
$k = 0$.
Next, assume the existence of $(x_{l})_{l\leq k}$ and $(w_{l})_{l\leq k}$
generated by Algorithm~\ref{alg:init-exact} (with $(\alpha_l)_{l\leq k}$ satisfying
Assumption~\ref{ass:alpha}) such that $\lim_j x_l^{\delta^j} = x_l$
and $\lim_j w_l^{\delta^j} = w_l$, for $l = 0 , \dots , k$.

Define $x_{k+1} := (\la_k A^*A + I)^{-1} (w_k + \la_k A^* y)$ as in Step~[2.2] of
Algorithm~\ref{alg:init-exact}. Thus, from Step~[2.3] of Algorithm~\ref{alg:init-noise}
follows
\begin{eqnarray*}
x_{k+1}^{\delta^j} - x_{k+1}
% &\!\!\! = &\!\!\!
%   w_k^{\delta^j} - w_k - \la_k A^*(A x_{k+1}^{\delta^j} - y^{\delta^j})
%   + \la_k A^* (A x_{k+1} - y) \\
&\!\!\! = &\!\!\!
  w_k^{\delta^j} - \la_k A^*(A x_{k+1}^{\delta^j} - y^{\delta^j})
- \big[ w_k - \la_k A^* (A x_{k+1} - y) \big] \\
&\!\!\! = &\!\!\!
  w_k^{\delta^j} - w_k - \la_k A^*A(x_{k+1}^{\delta^j} - x_{k+1})
  + \la_k A^*(y^{\delta^j} - y) \, .
\end{eqnarray*}
Consequently, $\norm{(I+\la_k A^*A)(x_{k+1}^{\delta^j}-x_{k+1})}
\leq \norm{w_k^{\delta^j} - w_k} + \norm{A} \la_k \delta^j$. This inequality
together with the inductive hypothesis and the fact that $\lim_j \delta^j = 0$,
allow us to estimate
\begin{equation} \label{eq:k1}
\lim_{j\to\infty} \norm{(I+\la_k A^*A)(x_{k+1}^{\delta^j}-x_{k+1})}
\ \leq \
\lim_{j\to\infty} \norm{w_k^{\delta^j} - w_k}
+ \norm{A} \la_k \lim_{j\to\infty} \delta^j \ = \ 0 \, .
\end{equation}
Since $\norm{(I+\la_k A^*A)} \geq 1$, we conclude from \eqref{eq:k1} that
$\lim_j \norm{x_{k+1}^{\delta^j} - x_{k+1}} = 0$.
\medskip

At this point, we have to consider two complementary cases:
\medskip

\noindent {\bf Case~1.} \ $\norm{A x_{k+1} - y} = 0$.
\ In this case, Algorithm~\ref{alg:init-exact} stops (see Remark~\ref{rem:station}).
Consequently, \eqref{eq:stabil-f} holds true with $k_0 = k+1$ (it is immediate to see
that $A x_{k_0} = y$).
\medskip

\noindent {\bf Case~2.} \ $\norm{A x_{k+1} - y} > 0$.
\ In this case we must consider two scenarios:

{\bf (2a)} $x_{k+1} \not= x_k$.
Choose $\alpha_{k+1}$ in agreement with Assumption~\ref{ass:alpha}, i.e.
\begin{equation} \label{eq:k3}
\alpha_{k+1} \ := \
\min \big\{ \theta_{k+1} \norm{x_{k+1}-x_k}^{-2}, \theta_{k+1}, \alpha \big\} > 0 ,
\end{equation}
and define $w_{k+1}$ as in Step~[2.1] of Algorithm~\ref{alg:init-exact}, i.e.
$w_{k+1} := x_{k+1} + \alpha_{k+1}(x_{k+1}-x_k)$.
%
% If $\norm{A x_{k+1} - y} = 0$, then $\alpha_{k+1} = 0$ and $w_{k+1} = x_{k+1}$
% (see Step~[2.1] of Algorithm~\ref{alg:init-exact}).
% Since $x_k^{\delta^j} \to x_k$ and $x_{k+1}^{\delta^j} \to x_{k+1}$ as $j\to\infty$,
% and $(\alpha^{\delta^j}_{k+1})_j \in [0,\alpha]$.
% Thus, \eqref{eq:lim-wk} implies $\lim_j w_{k+1}^{\delta^j} = w_{k+1}$.
%
Since $x_k^{\delta^j} \to x_k$ and $x_{k+1}^{\delta^j} \to x_{k+1}$ as $j \to
\infty$, it follows from Assumption~\ref{ass:alpha-noise} and \eqref{eq:k3} that
$\alpha^{\delta^j}_{k+1} \to \alpha_{k+1}$ as $j\to\infty$. Consequently,
\begin{equation} \label{eq:lim-wk1}
\lim_{j\to\infty} w_{k+1}^{\delta^j} \ = \
\lim_{j\to\infty} \big( x_{k+1}^{\delta^j}
  + \alpha_{k+1}^{\delta^j} (x_{k+1}^{\delta^j} - x_k^{\delta^j}) \big)
\ = \ x_{k+1} + \alpha_{k+1}(x_{k+1} - x_k) \ = \ w_{k+1} \, .
\end{equation}

{\bf (2b)} $x_{k+1} = x_k$. Choose $\alpha_{k+1} := 0$, in agreement with Assumption~%
\ref{ass:alpha}, and define $w_{k+1}$ as in Step~[2.1] of Algorithm~\ref{alg:init-exact},
i.e. $w_{k+1} := x_{k+1} + \alpha_{k+1}(x_{k+1} - x_k) = x_{k+1}$.
Since $\lim_j x_k^{\delta^j} = x_k$, $\lim_j x_{k+1}^{\delta^j} = x_{k+1}$,
and $(\alpha_{k+1}^{\delta^j})_j$ is bounded (see Assumption~\ref{ass:alpha-noise}),
it follows from Step~[2.2] of Algorithm~\ref{alg:init-noise} that
\begin{equation} \label{eq:lim-wk2}
\lim_{j\to\infty} w_{k+1}^{\delta^j} \ = \
\lim_{j\to\infty} \big( x_{k+1}^{\delta^j}
  + \alpha_{k+1}^{\delta^j} (x_{k+1}^{\delta^j} - x_k^{\delta^j}) \big)
\ = \ x_{k+1} \ = \ w_{k+1} \, .
\end{equation}

Thus, it follows from \eqref{eq:lim-wk1} and \eqref{eq:lim-wk2} that, in Case~2,
$w_{k+1}^{\delta^j} \to w_{k+1}$ as $j\to\infty$. This completes the inductive
proof. Consequently, in Case~2, the assertions in \eqref{eq:stabil} hold true.
\end{proof}

\begin{theorem}[Semi-convergence] \label{th:semi-conv}
Let $(\delta^j)_j$ be a zero sequence, $(y^{\delta^j})_j$ be a corresponding
sequence of noisy data satisfying \eqref{eq:noisy-i} for some $y \in Rg(A)$,
and assume that (A1), (A2) and (A3) in Theorem~\ref{th:conv} hold.
For each $j\in\N$, let $(x_l^{\delta^j})_{l=-1}^{k^*_j}$ and
$(w_l^{\delta^j})_{l=0}^{k^*_j-1}$ be sequences generated Algorithm~\ref{alg:init-noise},
with $(\alpha_l^{\delta^j})_{l=0}^{k^*_j}$ satisfying Assumption~\ref{ass:alpha-noise}
(here $k^*_j = k^*(\delta^j, y^j)$ are the corresponding stopping indices
defined in Step~[3] of Algorithm~\ref{alg:init-noise}).
\\
Then, the sequence $(x_{k^*_j}^{\delta^j})_j$ converges strongly to $x^\dag$,
the $x_0$-minimal norm solution of $Ax = y$.
\end{theorem}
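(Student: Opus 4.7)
My plan is to argue by subsequence extraction combined with a case distinction on the behavior of the stopping indices $(k^*_j)_j$. A standard subsequence argument reduces matters to showing that every subsequence of $\big(x_{k^*_j}^{\delta^j}\big)_j$ admits a further subsequence converging strongly to $x^\dag$. Two scenarios then arise: (i) some subsequence of $(k^*_j)$ is bounded, so that along a further subsequence $k^*_j \equiv k_0$ for a fixed $k_0 \in \N$; or (ii) $k^*_j \to \infty$.

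In scenario (i), Theorem~\ref{th:stabil} yields $x_{k_0}^{\delta^j} \to x_{k_0}$ along the subsequence. The discrepancy inequality $\|A x_{k_0}^{\delta^j} - y^{\delta^j}\| \leq \tau\delta^j$, combined with $\|y^{\delta^j} - y\| \leq \delta^j$ and continuity of $A$, gives $A x_{k_0} = y$ in the limit. A noisy-data analogue of Remark~\ref{rem:range} (proved by the same inductive argument, using Steps [2.2]--[2.3] of Algorithm~\ref{alg:init-noise}) gives $x_{k_0}^{\delta^j} - x_0 \in Rg(A^*)$ for every $j$; passing to the limit, $x_{k_0} - x_0 \in \overline{Rg(A^*)} \subset \mathcal{N}(A)^\perp$, and Remark~\ref{rem:min} identifies $x_{k_0}$ with $x^\dag$.

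In scenario (ii), I first observe that the exact iteration cannot terminate at a finite index: otherwise $A x_{k_0}=y$ would give (via Theorem~\ref{th:stabil} and continuity of $A$) $\|A x_{k_0}^{\delta^j} - y^{\delta^j}\| \to 0$, forcing $k^*_j \leq k_0$ for $j$ large and contradicting $k^*_j\to\infty$. Hence by Theorem~\ref{th:conv} the exact sequence is infinite and $x_k \to x^\dag$. Given $\varepsilon>0$, fix $N$ large enough that $\|x_N - x^\dag\|^2 < \varepsilon$ and $\sum_{l\geq N}\theta_l < \varepsilon$; by Theorem~\ref{th:stabil}, choose $J$ so that for $j \geq J$ one has $k^*_j > N$ and $\|x_N^{\delta^j} - x^\dag\|^2 < 2\varepsilon$. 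Setting $\varphi_k^\delta := \|x_k^\delta - x^\dag\|^2$ and $\eta_k^\delta := \alpha_k^\delta \|x_k^\delta - x_{k-1}^\delta\|^2$, I would combine Lemma~\ref{lem:gnoise} (whose penalty is non-positive for $k \leq k^*_j - 2$ because of the discrepancy bound $\|A x_{k+1}^{\delta^j} - y^{\delta^j}\| > \tau\delta^j > \delta^j$) with the identity \eqref{eq:w-noise} at $x = x^\dag$ to derive the quasi-monotone recursion $\varphi_{k+1}^{\delta^j} - \varphi_k^{\delta^j} \leq \alpha_k^{\delta^j}\big(\varphi_k^{\delta^j} - \varphi_{k-1}^{\delta^j}\big) + 2\eta_k^{\delta^j}$, valid for $N \leq k \leq k^*_j - 2$. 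Applying the Opial-type Lemma~\ref{lem:appA} on this window bounds $\varphi_{k^*_j - 1}^{\delta^j}$ by $\varphi_N^{\delta^j}$ plus a constant times $\sum_{l \geq N}\eta_l^{\delta^j} \leq \sum_{l \geq N}\theta_l$, while the final step $k^*_j - 1 \mapsto k^*_j$ contributes only an $O\big((\delta^j)^2\big)$ correction (since at that index only the weaker stopping inequality is available). Letting $j\to\infty$ and then $\varepsilon\to 0$ concludes $x_{k^*_j}^{\delta^j} \to x^\dag$.

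The principal obstacle is scenario (ii): the stopping index $k^*_j$ is a moving target, so stability at a single fixed index does not suffice. The crucial leverage is that Assumption~\ref{ass:alpha-noise} makes the inertial perturbations $\big(\eta_k^{\delta^j}\big)$ \emph{uniformly} summable by the deterministic sequence $(\theta_k)$, so the Opial-type propagation of $\varphi_k^{\delta^j}$ from the anchor index $N$ up to $k^*_j - 1$ introduces only an error that can be made arbitrarily small by enlarging $N$, independently of $j$.
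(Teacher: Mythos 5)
Your argument has the same architecture as the paper's proof: reduction to subsequences; a dichotomy according to whether $(k^*_j)_j$ admits a bounded (hence, along a further subsequence, constant) subsequence or diverges; in the first case Theorem~\ref{th:stabil} plus the discrepancy bound and the range argument of Remarks~\ref{rem:range} and~\ref{rem:min}; in the second case an anchor index chosen via Theorem~\ref{th:conv}, Theorem~\ref{th:stabil} and the smallness of the tail $\sum_{k\ge K}\theta_k$, followed by propagation of the error from the anchor to the moving index $k^*_j$, controlled uniformly in $j$ by $(\theta_k)$ through Assumption~\ref{ass:alpha-noise}. The only substantive difference lies in the propagation step. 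The paper stays with the unsquared distance: Lemma~\ref{lem:gnoise} yields $\norm{x_{k+1}^{\delta^j}-x^\dag}\le\norm{w_k^{\delta^j}-x^\dag}\le\norm{x_k^{\delta^j}-x^\dag}+\alpha_k^{\delta^j}\norm{x_k^{\delta^j}-x_{k-1}^{\delta^j}}$, and Young's inequality bounds each increment by $\theta_k$ (see \eqref{eq:telescopic}); no appeal to Lemma~\ref{lem:appA} is needed. Your squared-distance recursion combined with ``Lemma~\ref{lem:appA} on a window'' can be made to work, but note that Lemma~\ref{lem:appA} as stated is purely qualitative: you must extract from its proof the quantitative bound $\varphi_m\le\varphi_N+\tfrac{\alpha}{1-\alpha}\,[\varphi_N-\varphi_{N-1}]_+ +\tfrac{2}{1-\alpha}\sum_{l\ge N}\eta_l$ and then absorb the extra $(1-\alpha)^{-1}$ constants and the anchor difference term. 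Also, your ``$O\big((\delta^j)^2\big)$'' correction for the final step hides a factor $\la_{k^*_j-1}$, which is innocuous only if $(\la_k)$ is bounded above; this deserves a word, although the paper itself applies the monotonicity up to $k=k^*_j-1$ without separating that step.

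The one step that fails as written is your exclusion of finite termination of the exact iteration in scenario (ii). From $Ax_{k_0}=y$ and Theorem~\ref{th:stabil} you obtain $\norm{Ax_{k_0}^{\delta^j}-y^{\delta^j}}\to0$, but the threshold $\tau\delta^j$ tends to zero as well, and stability provides no rate relating $\norm{x_{k_0}^{\delta^j}-x_{k_0}}$ to $\delta^j$; hence you cannot conclude $\norm{Ax_{k_0}^{\delta^j}-y^{\delta^j}}\le\tau\delta^j$ for large $j$, so ``$k^*_j\le k_0$ eventually'' does not follow. This matters because your scenario (ii) needs the exact iterates $x_N$ to exist for arbitrarily large anchor indices $N$ (the paper's Case~2 makes the same unspoken assumption, and its Case~1 likewise presumes the exact sequence reaches the index $n$). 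The standard repair is to note that if $Ax_{k_0}=y$ then, by Remarks~\ref{rem:range} and~\ref{rem:min}, $x_{k_0}=x^\dag$ and $x_{k_0}$ is a fixed point of the exact iniT step with $\alpha_k=0$; one may therefore extend the exact sequence by $x_k:=x^\dag$ for $k>k_0$, and the inductive argument of Theorem~\ref{th:stabil} applies verbatim to this extension. With that modification your anchor can be placed at any $N$ and the remainder of your scenario (ii) goes through.
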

\begin{proof}
It suffices to prove that every subsequence of $(x_{k^*_j}^{\delta^j})_j$ has
itself a subsequence converging strongly to $x^\dag$. In what follows, we denote
a subsequence of $(x_{k^*_j}^{\delta^j})_j$ again by $(x_{k^*_j}^{\delta^j})_j$. 

Let $(x_k)$, $(w_k)$ be sequences generated by Algorithm~\ref{alg:init-exact}
with exact data $y$ and $(\alpha_k)$ as in the theorem assumptions.
Two cases are considered:
\medskip

\noindent {\bf Case~1.} \
The corresponding subsequence $(k^*_j)_j \in \N$ has a finite accumulation point. \\
In this case, we can extract a subsequence $(k^*_{j_m})$ of $(k^*_j)$
such that $k^*_{j_m} = n$, for some $n \in \N$ and all indices $j_m$.
Applying Theorem~\ref{th:stabil} to $(\delta^{j_m})$ and $(y^{\delta^{j_m}})$,
we conclude that $x_{k^*_{j_m}}^{\delta^{j_m}} = x_n^{\delta^{j_m}} \to x_n$,
as $j_m \to \infty$. We claim that $A x_n = y$. Indeed, notice that
$\norm{A x_n - y} = \lim_{j_m\to\infty} \norm{A x_{n}^{\delta^{j_m}} - y} \leq
\lim_{j_m\to\infty} \big( \norm{A x_{n}^{\delta^{j_m}} - y^{\delta^{j_m}}} +
\norm{y^{\delta^{j_m}} - y} \big) \leq \lim_{j_m\to\infty} (\tau+1) \delta^{j_m} = 0$.
\medskip

% \begin{eqnarray*}
% \norm{A x_n - y}
% &  =   &
%   \lim_{j_m\to\infty} \norm{A x_{k^*_{j_m}}^{\delta^{j_m}} - y}
% \ \leq \
%   \lim_{j_m\to\infty} \big( \norm{A x_{n}^{\delta^{j_m}} - y^{\delta^{j_m}}}
%   + \norm{y^{\delta^{j_m}} - y} \big) \\
% & \leq &
%       \lim_{j_m\to\infty} (\tau+1) \delta^{j_m} = 0 .
% \end{eqnarray*}

\noindent {\bf Case~2.} \
The corresponding subsequence $(k^*_j)_j$ has no finite accumulation point. \\
In this case we can extract a monotone strictly increasing subsequence, again
denoted by $(k^*_j)_j$, such that $k^*_j \to \infty$ as $j \to \infty$.

Take $\varepsilon > 0$. From Theorem~\ref{th:conv} follows the existence of
$K_1 = K_1(\varepsilon) \in \N$ such that
\begin{equation} \label{eq:eps3-1}
\norm{x_k - x^\dag} \ < \ \T\frac13 \varepsilon \, , \ k \geq K_1 \, .
\end{equation}
Since $\sum_k \theta_k$ is finite (see Assumption~\ref{ass:alpha-noise}),
there exists $K_2 = K_2(\varepsilon) \in \N$ such that
\begin{equation} \label{eq:eps3-2}
\summ_{k\geq K_2} \theta_k \ \leq \ \frac13 \varepsilon \, .
\end{equation}
Define $K = K(\varepsilon) := \max\{ K_1 , K_2 \}$.
Due to the monotonicity of the subsequence $(k^*_j)_j$, there exists
$J_1 \in \N$ such that $k^*_j \geq K$ for $j \geq J_1$.

Theorem~\ref{th:stabil} applied to the subsequences $(\delta^j)_j$,
$(y^{\delta^j})_j$ (corresponding to the subsequence $(k^*_j)_j$)
implies the existence of $J_2 \in \N$ s.t.
\begin{equation} \label{eq:eps3-3}
\norm{x_K^{\delta^j} - x_{K}} \ \leq \ \T\frac13 \varepsilon \, , \ j > J_2 \, .
\end{equation}
Set $J := \max \{ J_1, J_2 \}$.
From Lemma~\ref{lem:gnoise} (with $x^* = x^\dag$) and Step~[2.2] of
Algorithm~\ref{alg:init-noise} follow
$$
\norm{x_{k+1}^{\delta^j} - x^\dag} \ \leq \ \norm{w_k^{\delta^j} - x^\dag}
\ \leq \ \norm{x_k^{\delta^j} - x^\dag} + \alpha_k^{\delta^j}
         \norm{x_k^{\delta^j} - x_{k-1}^{\delta^j}} \, ,
$$
for $j \geq J$ and $k = 0, \dots, k^*_j - 1$. Consequently,
\begin{equation} \label{eq:telescopic}
\norm{x_{k+1}^{\delta^j} - x^\dag} - \norm{x_k^{\delta^j} - x^\dag}
\ \leq \ \sqrt\alpha_k^{\delta^j} \, \sqrt\alpha_k^{\delta^j}
         \norm{x_k^{\delta^j} - x_{k-1}^{\delta^j}}
\ \leq \ \T\frac12 \alpha_k^{\delta^j} + \frac12 \alpha_k^{\delta^j}
         \norm{x_k^{\delta^j} - x_{k-1}^{\delta^j}}^2 ,
\end{equation}
for $j \geq J$ and $k = 0, \dots, k^*_j - 1$. 
Now, adding \eqref{eq:telescopic} for $k = K, \dots, k^*_j-1$ we obtain
$$
\norm{x_{k^*_j}^{\delta^j} - x^\dag} \ \leq \
  \norm{x_K^{\delta^j} - x^\dag}
  \ + \ \T\frac12 \summ_{k=K}^{k^*_j-1} \alpha_k^{\delta^j}
  \ + \ \frac12 \summ_{k=K}^{k^*_j-1} \alpha_k^{\delta^j}
        \norm{x_k^{\delta^j} - x_{k-1}^{\delta^j}}^2 ,
  \ j \geq J \, .
$$
Thus, arguing with Assumption~\ref{ass:alpha-noise}, together with
\eqref{eq:eps3-1}, \eqref{eq:eps3-2} and \eqref{eq:eps3-3}, we obtain
\begin{eqnarray*}
\norm{x_{k^*_j}^{\delta^j} - x^\dag}
& \leq &
  \norm{x_K^{\delta^j} - x^\dag}
  \ + \ \T\frac12 \summ_{k=K}^{k^*_j-1} \theta_k
  \ + \ \frac12 \summ_{k=K}^{k^*_j-1} \theta_k \nonumber
\\
& \leq &
  \norm{x_K^{\delta^j} - x_K} \ + \ \norm{x_K - x^\dag}
  \ + \ \summ_{k\geq K} \theta_k \, ,
\\
& \leq &
  \T\frac13\varepsilon + \frac13\varepsilon + \frac13\varepsilon
  \, , \ j \geq J \, .
\end{eqnarray*}

Repeating the above argument for $\varepsilon = 1, \frac12, \frac13, \dots$
we are able to generate a sequence of indices $j_1 < j_2 < j_3 \dots$ such that
$$
\norm{x_{k^*_{j_m}}^{\delta^{j_m}} - x^\dag} \ \leq \ \frac1m \, , \ m \in \N \, .
$$
This concludes Case~2, and completes the proof of the theorem.
\end{proof}

% ------------------------------------------------------------------------
\section{Numerical experiments} \label{sec:numerics}

In this section the {\em Inverse Potential Problem} \cite{FSL05, CLT09, HR96, DAL09}
and the {\em Image Deblurring Problem} \cite{BeBo98, Ber09} are used to test the
numerical efficiency of the iniT method.
All computations are performed using MATLAB\textsuperscript{\textregistered}\,R2017a,
running on an Intel\textsuperscript{\textregistered}\,Core\textsuperscript{\tiny TM}%
\,i9-10900 CPU.

\subsection{The Inverse Potential Problem} \label{ssec:num-ipp}

The underlying forward problem is as follows. Let $\Omega \subset \R^2$ be
bounded with Lipschitz boundary. For a given source $x \in L_2(\Omega)$, consider
the elliptic boundary value problem (BVP) of finding $u$ such that
\begin{equation} \label{eq:ipp}
- \Delta u \, = \, x \, , \ {\rm in} \ \Omega \, , \quad
u \, = \, 0          \, , \ {\rm on} \ \partial\Omega .
\end{equation}
weakly.

The corresponding inverse problem is the Inverse Potential Problem (IPP).
It consists of recovering an $L_2$--function $x$, from measurements of the Neumann
trace of its corresponding potential $u \in H^1(\Omega)$ on the boundary of $\Omega$,
i.e. we aim to recover $x \in L_2(\Omega)$ from the available data $y := u_{\nu}
|_{\partial\Omega} \in H^{-1/2}(\partial\Omega)$.%
\footnote{Here $u_{\nu}$ denotes the outward normal derivative of $u$ on
$\partial\Omega$.}

For issues related to 'redundant data' and 'identifiability' in IPP we refer the
reader to \cite{CLT09} and the references therein.
Generalizations of this linear inverse problem lead to distinct applications, e.g.,
Inverse Gravi\-metry \cite{Isa06, DAL09}, EEG \cite{EF06}, and EMG \cite{DAP08}.

The linear direct problem is modeled by the operator $A: L_2(\Omega) \to H^{-1/2}
(\partial\Omega)$, which is defined by $A x := u_{\nu} |_{\partial\Omega}$, where
$u \in H_0^1(\Omega)$ is the unique weak solution of \eqref{eq:ipp} (for the
solution theory of this particular BVP we refer the reader to \cite{GiTr98, HR96}).
Using this notation, the IPP can be modeled in the form \eqref{eq:inv-probl},
where the available noisy data $y^\delta \in H^{-1/2}(\partial\Omega)$ satisfies
\eqref{eq:noisy-i}.

\subsubsection*{Discretization of the direct problem}

The discretization of the BVP~\eqref{eq:ipp} relies on finite element techniques.
Assume that $x\in L^2(\Omega)$ is piecewise constant, i.e., $x=\sum_{i=1}^N x_i\chi_i$,
where $\chi_i(\cdot)$ is the characteristic function of the element $K_i \subset \Omega$.
Those elements define a partition of $\Omega$ in the sense that $K_i\cap K_j=\emptyset$
for $i\ne j$, and $\bar\Omega=\cup_{i=1}^N\bar K_i$. As a preprocessing step, we
determine $\gamma_i = (u_i)_\nu |_{\partial\Omega}$ for $i = 1,\dots,N$, where
\begin{equation} \label{eq:ipp_i}
-\Delta u_i \, = \, \chi_i \,  \ {\rm in} \ \Omega \, , \quad
u_i \, = \, 0         \, \ {\rm on} \ \partial\Omega\, .
\end{equation}
Then, $\gamma=\sum_{i=1}^N x_i \gamma_i$.

Of course, solving~\eqref{eq:ipp_i} exactly is not feasible, and a further
discretization step is necessary. Discretizing $\Omega$ into triangular finite
elements of maximum length $h \ll \operatorname{diameter} K_i$ for $i = 1,\dots,N$,
and using a primal-hybrid finite element discretization~\cite{RT77,AHPV13,MS21},
we compute $\gamma_i^h$ as an approximation of $\gamma_i$.

\subsubsection*{Experiments with noisy data (IPP)}

The numerical experiments discussed in this section follow \cite{BLS20,CLT09}.
Here, $\Omega = (0,1) \times (0,1)$ and the unknown ground
truth $x^\star$ is assumed to be an $L_2$--function (see Figure~\ref{fig:IPP-setup}).
The iniT method is compared with the iT method. The setup of our numerical
experiments is detailed as follows:
\bigskip

--- Solve problem~\eqref{eq:ipp} with $x = x^\star$, and compute the exact data $y$.
    \smallskip

--- Add $0.1\%$ and $5\%$ of uniformly distributed random noise to the exact data,
    generating

    \ \ \ \ the noisy data $y^\delta$. \smallskip

--- Use the constant function $x_0 = 1.5$ as initial guess for the iT and iniT methods.
    \smallskip

--- Employ the discrepancy principle with $\tau=1.5$ as the stopping criteria for the
    iT and

    \ \ \ \ iniT methods. \smallskip

--- $\la_k = (\frac23)^k$ for both methods.
    \smallskip

--- Choose $\alpha_k^\delta$ as in Assumption~\ref{ass:alpha-noise} for the iniT method
    with \smallskip
    
    \centerline{ $\theta_k = (1/k)^{1.1}$. } \smallskip

--- Compute $x_{k+1}^\delta$ in Step~[2.3] of Algorithm~\ref{alg:init-noise};
    {\color{black} in each iteration $k = 1, \dots k^*(\delta)$ the {\em Conju-}

    \ \ \ \ {\em gate Gradient} (CG) method \cite{GVL13, Hac91}, MATLAB routine with
    tolerance $10^{-6}$, is used to
    
    \ \ \ \ compute the step \,$s_k^\delta := x_{k+1}^\delta - x_k^\delta$.
    \smallskip

--- In the iT method, obtain $x_{k+1}^\delta$ by solving $(\la_k A^*A + I)
(x - x_k^\delta) = \la_k A^* (y^\delta - A x_k^\delta)$ using
    
    \ \ \ \ the MATLAB CG-routine with tolerance $10^{-6}$.}
    \bigskip

\noindent {\bf Numerical test~1 ($\mathbf{0.1\%}$ noise):}
The iniT method and the iT method are implemented for solving the IPP under the above
described setup, reaching the stopping criteria after 17 and 19 steps respectively.

In Figure~\ref{fig:IPP-setup} the results obtained by the iniT method are presented.
The stopping criterion is reached after $k^*(\delta) = 17$ steps.
The top figure shows the exact solution, the center figure displays the approximate
solution $x_{17}^\delta$; the relative iteration error $|x_{17}^\delta - x^\star|
/ |x^\star|$ is depicted at the bottom figure. Note that the quality of the
reconstruction is not as good close to the discontinuity curve of $x^\star$,
improving farther from it.

The progress of the corresponding {\em relative iteration error}
$\norm{x_k^\delta - x^\star} / \norm{x^\star}$ and {\em relative residual}
$\norm{A x_k^\delta - y^\delta} / \norm{y^\delta}$ are depicted in
Figure~\ref{fig:IPP-evolution1}.
In each of the subplots, we display the iT method in black, and the iniT method in red.
Note the presence of a third curve, in blue. That corresponds to fixing $\alpha_k=2/3$
constant, a choice not covered by our theory (see \emph{Numerical tests 3} below for
further discussion).
In Table~\ref{tab:CG-01pc} the number of CG-steps evaluated at each iteration of the
methods in Figure~\ref{fig:IPP-evolution1} is compared.

\begin{table}[ht]
\begin{center} \begin{tabular}{l c c c c c c c c c c c c c c c c c c c}
\cline{2-20}
{} & \multicolumn{19}{c}{Iteration number} \\
\cline{2-20}
{} &\!1 &\!2 &\!3 &\!4 &\!5 &\!6 &\!7 &\!8 &\!9 &\!\!10&\!\!11&\!\!12&\!\!13&\!\!14&\!\!15&\!\!16&\!\!17&\!\!18&\!\!19 \\
\hline
iT  &\!3&\!3&\!3&\!3&\!3&\!4 &\!4&\!5&\!5&\!\!6&\!\!6&\!\!7&\!\!8&\!\!9&\!\!11&\!\!12&\!\!14&\!\!16&\!\!18 \\
iniT ($\alpha_k = \frac23$)
    &\!3&\!3&\!3&\!3&\!3&\!4&\!4&\!5&\!5&\!\!6 &\!\!6&\!\!7&\!\!8&\!\!9&\!\!10&\!\!12&\!\!13& & \\
iniT &\!\!3&\!\!3&\!\!3&\!\!3&\!\!3&\!\!4&\!\!4&\!\!5&\!\!5&\!\!6&\!\!6&\!\!7&\!\!8&\!\!9&\!\!11&\!\!13&\!\!14& & \\
\hline
\end{tabular}\end{center}
\vskip-0.3cm
\caption{\small Noise level $0.1\%$. Number of CG-steps required to compute
$x_k^\delta$ in each step of the methods presented in Figure~\ref{fig:IPP-evolution1}.}
\label{tab:CG-01pc}
\end{table}

  The accumulated number of CG-steps of these methods read:
% The accumulated number of CG-steps and the execution time of these methods read:

\rule{14.5cm}{1pt} \\ \centerline{iT -- 140 CG-steps \quad\quad
iniT ($\alpha_k = \frac23$) -- 104 CG-steps \quad\quad
iniT -- 107 CG-steps}
%
% \centerline{\ \ \ \ 1.75 seconds \quad\quad\quad\quad\quad
% \ \ \ \ \ \ \ \ \ \ \ \ \ \ 1.37 seconds \quad\quad
% \ \ \ \ \ \ \ \ \ \ 1.40 seconds}
%
\vskip-1.5ex \rule{14.5cm}{1pt}
\medskip

\noindent Notice that the iT method require $30\%$ more CG-steps than the
iniT method. \medskip

\noindent {\bf Numerical test~2 ($\mathbf{5.0\%}$ noise): }
Similarly to the previous case, we present in Figure~\ref{fig:IPP-evolution2}
how the iniT and iT methods perform under a $\mathbf{5.0\%}$ noise level.
The stopping criteria are reached after 9 and 12 steps respectively. The
subplot display the corresponding \emph{relative iteration error} (top) and
\emph{relative residual} (bottom). The curves in black and red correspond to
the iT and iniT methods. The blue curve corresponds to a constant $\alpha_k=2/3$
case, not covered by the theory (see \emph{Numerical tests 3} for further
discussion).
In Table~\ref{tab:CG-50pc} the number of CG-steps needed in each iteration of
the methods presented in Figure~\ref{fig:IPP-evolution2} is compared.

\begin{table}[ht]
\begin{center} \begin{tabular}{l c c c c c c c c c c c c c c c c c c c}
\cline{2-13}
{} & \multicolumn{12}{c}{Iteration number} \\
\cline{2-13}
{} & 1 & 2 & 3 & 4 & 5 & 6 & 7 & 8 & 9 & 10& 11& 12 \\
\hline
iT                 & 3& 3& 3& 3& 3& 4 & 4& 5 & 5& 6 & 7 & 7 \\
iniT
($\alpha_k = 0.8$) & 3& 3& 3& 3& 3& 4 & 4& 5 & 5&   &   &   \\
iniT               & 3& 3& 3& 3& 4& 4 & 4& 5 & 5&   &   &   \\
\hline
\end{tabular}\end{center}
\vskip-0.3cm
\caption{\small Noise level $5.0\%$. Number of CG-steps required to compute
$x_k^\delta$ in each step of the methods presented in Figure~\ref{fig:IPP-evolution2}.}
\label{tab:CG-50pc}
\end{table}

The accumulated number of CG-steps of these methods read:
% The accumulated number of CG-steps and the execution time of these methods read:
\smallskip

\rule{14.5cm}{1pt} \\ \centerline{iT -- 53 CG-steps \quad\quad
iniT ($\alpha_k = 0.8$) -- 33 CG-steps \quad\quad
iniT -- 34 CG-steps}

% \centerline{\ \ \ \ \ \ \ 0.80 seconds \quad\quad\quad\quad\quad
% \ \ \ \ \ \ \ \ \ \ \ \ \ \ 0.53 seconds \quad\quad
% \ \ \ \ \ \ \ \ \ 0.54 seconds}
\vskip-1.5ex \rule{14.5cm}{1pt}
\medskip

\noindent Notice that the iT method require $56\%$ more CG-steps than the
iniT method. \medskip

\noindent {\bf Numerical test~3 (experimenting with constant $\mathbf{\alpha_k^\delta}$):}
Observing the choice of the scaling parameters in Nesterov's accelerated
forward-backward scheme \eqref{eq:nesterov}, a natural question arises:
``How does the iniT method performs if one chooses $\alpha_k^\delta$ constant
in Algorithm~\ref{alg:init-noise}?''

Differently from the explicit two-point type methods (which include Nesterov's
scheme), in the implicit two-point methods (or inertial methods) one must assume
that $\sum_k \alpha_k < \infty$ and
$\sum_k \alpha_k \norm{x_k - x_{k-1}}^2 < \infty$. Indeed, these hypothesis
are needed in the proof of the convergence Theorem~\ref{th:conv} (they are
used to obtain \eqref{eq:series}). As a matter of fact, the summability of the
series $\sum_k \alpha_k \norm{x_k - x_{k-1}}^2$ is quintessential in our analysis:
it is used to prove boundedness of the sequence $(x_k)$ (see Proposition~%
\ref{prop:aux}~(b)).%
\footnote{An analog assumption is used in \cite[Theor.2.1]{AA01} to prove weak
convergence of the inertial proximal method.}

Thus, choosing $\alpha_k^\delta$ constant may not lead to bounded iterations.
Nevertheless, we implemented the iniT method for different (constant) values of
$\alpha_k^\delta$, ranging in the interval (0,1).
In Figures~\ref{fig:IPP-AlphaConst1} and~\ref{fig:IPP-AlphaConst2} we revisit
the noise level scenarios of $0.1\%$ and $5.0\%$ respectively.

For the noise level of $0.1\%$, the best result was obtained for $\alpha_k^\delta
= 2/3$, while $\alpha_k^\delta = 8/10$ was the best choice for the
noise level of $5.0\%$.
For comparison purposes, the iniT method results with these optimal choices of
(constant) $\alpha_k^\delta$ are presented in Figures~\ref{fig:IPP-evolution1}
and~\ref{fig:IPP-evolution2} respectively (blue curves). Notice that:
\smallskip

--- For (constant) $\alpha_k^\delta$ close to zero, the iniT method performs
    similarly as the iT method; \smallskip

--- For (constant) $\alpha_k^\delta$ close to one, the iniT method becomes
    unstable; \smallskip

--- The iniT method with constant choice $\alpha_k^\delta = 2/3$ performs
    similarly as the iniT
    
    \ \ \ \ method with $\alpha_k^\delta$ satisfying Assumption~\ref{ass:alpha-noise}.
    \medskip

\noindent {\bf Numerical test~4 (comparison with explicit two-point type methods):}
The Nesterov's scheme \eqref{eq:nesterov} and the FISTA method \cite{BeTe09} are
two well known methods for solving linear systems of the form \eqref{eq:inv-probl}.
Another natural question that arises is: ``How does the iniT method performance
compare with performance of these established explicit two-point methods?''

To answer this question we revisit the $0.1\%$ noise scenario. In Figure~%
\ref{fig:IPP-explicit} the iniT method (implicit), the Nesterov and the FISTA
methods (both explicit) are implemented for solving the IPP. In the Nesterov
and FISTA methods we use $\gamma = \norm{A^*A}^{-1/2}$. Moreover, in the Nesterov
method we set (the frequently used value) $\alpha = 3.0$.

Due to the distinct nature (implicit/explicit) of these methods, the numerical
effort cannot be compared by simply computing the number of iterations necessary to
reach the stopping criterion. Instead, we compute the execution time of these methods:
\smallskip

\rule{14.5cm}{1pt} \\ \centerline{iniT -- 1.40 seconds \quad\quad
Nesterov -- 6.61 seconds \quad\quad
FISTA -- 6.76 seconds} \vskip-1.5ex \rule{14.5cm}{1pt}
\medskip

% \noindent The execution time of Nesterov and FISTA methods include the
% computation of $\gamma$ as above. If one neglects this computation, the
% execution time becomes: Nesterov -- 0.50 seconds, FISTA -- 0.49 seconds.
% \medskip

\subsection{The Image Deblurring Problem} \label{ssec:num-idp}

The here considered Image Deblurring Problem (IDP) is an ill-posed inverse problem
modeled by a finite-dimensional linear system of the form \eqref{eq:inv-probl}.
In this problem $x^\star \in X = R^n$ are the pixel values of a true image,
while $y \in Y = X$ represents the pixel values of the observed (blurred) image.
In real situations, blurred (noisy) data $y^\delta \in Y$ satisfying
\eqref{eq:noisy-i} are available and the noise level $\delta > 0$ is not
always known.

In this setting the matrix $A$ in \eqref{eq:inv-probl} describes the blurring
process, which is modeled by a space invariant point spread function (PSF).
In the continuous model, the blurring phenomena is modeled by a convolution
operator and $A$ corresponds to an integral operator of the first kind
\cite{Gr84, EngHanNeu96}.
In our model, the discrete convolution is evaluated by means of the Fast Fourier
Transform (FFT) algorithm. We added to the exact data, i.e. the true image
convoluted with the PSF, a normally distributed noise with zero mean and suitable
variance.

In our numerical implementations we follow \cite{BLS20} in the experimental
setup, see Figure~\ref{fig:ID-setup}: (LEFT) True image $x^\star \in R^n$,
with $n = 2^{16}$ (Cameraman image $256 \times 256$);
(CENTER) PSF is the rotationally symmetric Gaussian low-pass filter of dimension
$[257 \times 257]$ and standard deviation $\sigma = 4$; (RIGHT) Blurred image
$y = Ax \in R^n$.

\subsubsection*{Experiments with noisy data (IDP)}

The iniT and the iT method are compared. The setup of our experiments is as follows:
\medskip

--- Exact data $y = A x^\star = PSF * x^\star$ is computed.
    \smallskip

--- Noise of $0.1\%$ and $1\%$ is added to $y$, generating the noisy data
    $y^\delta$. \smallskip

--- The constant function $x_0 = 0$ is used as initial guess for the iT and iniT
    methods. \smallskip

--- The discrepancy principle, with $\tau=1.1$, is used as stopping criterion for
    all methods. \smallskip

--- $\la_k = (\frac23)^k$ in both methods. \smallskip

--- Choose $\alpha_k^\delta$ as in Assumption~\ref{ass:alpha-noise} for the iniT method
    with \smallskip
    
    \centerline{ $\theta_k = (1/k)^{1.1}$. } \smallskip

--- In both iniT and iT methods the computation of $x_{k+1}^\delta$ is performed
    explicitly (in the

    \ \ \ \ frequency domain the convolution corresponds to a multiplication).
    \medskip

\noindent {\bf Numerical test~5 (high/low noise):}
The iniT method and the iT method are implemented for solving the IDP under the
above described setup. Two distinct levels of noise are considered, namely
$\delta = 0.1\%$ and $\delta = 1.0\%$.

In the $\delta = 0.1\%$ noise scenario, the stopping criteria are reached after
25 and 33 steps respectively.
For $\delta = 1.0\%$, the stopping criteria are reached after 6 and 8 steps
respectively.
The progress of the corresponding relative iteration error and relative
residual for both methods are presented in Figure~\ref{fig:ID-evol}.
It is worth noticing that the iniT method applied to this example displays
similar computational savings as discussed in Section~\ref{ssec:num-ipp}.

% ------------------------------------------------------------------------
\section{Final remarks and conclusions} \label{sec:conclusions}

In this manuscript we propose and analyze an implicit two-point type iteration,
namely the {\em inertial iterated Tikhonov} (iniT) method, as an alternative
for obtaining stable approximate solutions to linear ill-posed operator
equations.

The main results discussed in this notes are: boundedness of the sequences
$(x_k)$ and $(w_k)$ generated by the iniT method (Proposition~\ref{prop:aux},
convergence for exact data (Theorem~\ref{th:conv}), stability and
semi-convergence for noisy data (Theorems~\ref{th:stabil}
and~\ref{th:semi-conv} respectively).

We faced an unexpected challenge in our analysis, namely the derivation of a
monotonicity result for the iteration error (i.e. $\norm{x_{k+1} - x^\star}
\leq \norm{x_k - x^\star}$, where $x^\star$ is a solution of $Ax=y$). This
is a standard result in the analysis of iterative regularization methods
\cite{EngHanNeu96, KalNeuSch08}, and is used to establish boundedness and
convergence of $(x_k)$.
Due to the structure of the inertial iteration \eqref{def:iniT}, we are
only able to prove (under the current assumptions) that
$\norm{x_{k+1} - x^\star} \leq \norm{w_k - x^\star}$ (see
Proposition~\ref{prop:aux}~(a)). Notice that, in order to establish the
boundedness of $(x_k)$ in Proposition~\ref{prop:aux}~(b), we had to
resort to Lemma~\ref{lem:appA}, which follows from a result by
Alvarez and Attouch \cite[Theorem~2.1]{AA01}.

The proof of the convergence result in Theorem~\ref{th:conv} uses a novel
strategy. The classical proof is based on a telescopic-sum argument
coupled with the above mentioned monotonicity inequality.
Since the monotonicity of $\norm{x_{k+1}-x^\star}$ is not available, we
used an additional assumption on $(\alpha_k)$ (see Assumption~(A2) in
Theorem~\ref{th:conv}) in order to apply an alternative telescopic-sum
argument (see \eqref{eq:txlj}).

The lack of a monotonicity result also influences the verification of
the stability and semi-convergence results (Theorems~\ref{th:stabil}
and~\ref{th:semi-conv} respectively). The proofs presented here rely
on properties of the sequences $( \alpha_k^{\delta^j} )_k$ and
$( \alpha_k^{\delta^j} \norm{x_k^{\delta^j} - x_{k-1}^{\delta^j}}^2 )_k$.

The choice of $\theta_k$ in Assumptions~\ref{ass:alpha}
and~\ref{ass:alpha-noise} plays a key role in the numerical performance
of the iniT method. In our experiments, we tried $\theta_k = (1/k)^p$ for
distinct choices of $p>1$. The best results were obtained for values of
$p$ close to one.

Our numerical results demonstrate that the iniT method outperforms the
standard iT method (for the same choice of Lagrange multipliers $\la_k$):
\\
$\bullet$ IPP: iniT requires $10\%$ to $20\%$ less CG-steps than iT to reach
the stopping criterion.
\\
$\bullet$ IDP: iniT requires approximately $25\%$ less iterations than iT to
reach the stopping criterion.

{\color{black}
Our numerical experiments cover two of the most relevant families of inverse
problems, namely 'PDE models' and 'integral operators models' \cite{Bau87,
EngHanNeu96, Gr84, Kir96, Rie03}. The benefits
of the proposed iniT method, as compared to the iT method, are readily evident
in all the experiments discussed.
Our numerical results indicate that iniT is a competitive method for solving
other highly ill-posed linear problems within these two families of problems.}

\color{black}
% ------------------------------------------------------------------------
\appendix \setcounter{section}{1}
\section*{Appendix~A} \label{sec:appA}

In what follows we address a result, which is needed for the proof of
Proposition~\ref{prop:aux}. This result corresponds to a (small) part
of the proof of \cite[Theorem~2.1]{AA01}. For the convenience of the
reader, we present here a sketch of the proof.

\begin{lemma} \label{lem:appA}
Let $(\alpha_k)_{k\geq0} \in [0,\alpha]$, with $\alpha \in (0,1)$. Moreover,
let $(\varphi_k)_{k\geq-1}$, $(\eta_k)_{k\geq0}$ be sequences of non-negative
real numbers s.t.
$\varphi_{k+1} - \varphi_k < \alpha_k (\varphi_k - \varphi_{k-1}) + 2\eta_k$,
for $\ k \geq 0$,
\ and \
$\sum_{k\geq0} \eta_k < \infty$.
There exists $\bar\varphi \in \R$ such that $\D\lim_{k\to\infty}
\varphi_k = \bar\varphi$.
\end{lemma}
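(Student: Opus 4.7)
The plan is to adapt the classical argument of Alvarez--Attouch, which proceeds by isolating the positive part of the increments of $(\varphi_k)$. Introduce
$$
t_k \ := \ (\varphi_k - \varphi_{k-1})_+ \ = \ \max\{\varphi_k - \varphi_{k-1}, 0\}\, , \quad k \geq 0 \, .
$$
The first step is to show that $(t_k)$ satisfies the linear one-step contractive recursion
$$
t_{k+1} \ \leq \ \alpha\, t_k + 2\eta_k \, , \quad k \geq 0 \, .
$$
This follows from the hypothesis by observing that for any real $x$ we have $\alpha_k x \leq \alpha (x)_+$ (if $x \geq 0$ use $\alpha_k \leq \alpha$; if $x < 0$ both sides are $\leq 0$ and the right side is $0$), which gives $\varphi_{k+1} - \varphi_k \leq \alpha\, t_k + 2\eta_k$, and then taking positive part on the left.

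The second step is to extract summability of $(t_k)$ from this recursion. Iterating yields $t_{k+1} \leq \alpha^{k+1} t_0 + 2\sum_{j=0}^{k} \alpha^{k-j} \eta_j$; summing in $k$ and swapping the order of summation gives
$$
\summ_{k\geq 0} t_k \ \leq \ \frac{t_0}{1-\alpha} + \frac{2}{1-\alpha} \summ_{j\geq 0} \eta_j \ < \ \infty \, ,
$$
where finiteness uses the summability of $(\eta_k)$.

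The third step is to deduce convergence of $(\varphi_k)$ by a standard decomposition. Set
$$
\psi_k \ := \ \varphi_k - \summ_{j=0}^{k} t_j \, , \quad k \geq 0 \, .
$$
Then $\psi_{k+1} - \psi_k = (\varphi_{k+1} - \varphi_k) - t_{k+1} \leq 0$ by the very definition of $t_{k+1}$, so $(\psi_k)$ is non-increasing. It is also bounded below because $\varphi_k \geq 0$ and $\sum_j t_j < \infty$, so $\psi_k \geq -\sum_{j\geq 0} t_j > -\infty$. Hence $\psi_k \to \bar\psi$ for some $\bar\psi \in \R$, and consequently $\varphi_k = \psi_k + \sum_{j=0}^{k} t_j \to \bar\varphi := \bar\psi + \sum_{j\geq 0} t_j$.

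The main obstacle is the first step: passing from the hypothesized inequality (whose right side may be negative when $\varphi_{k-1} > \varphi_k$) to a genuine contraction on the nonnegative quantities $t_k$. Once this is in place, the summability of $(t_k)$ and the non-increasing/bounded-below argument for $(\psi_k)$ are routine.
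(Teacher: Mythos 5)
Your proof is correct and follows essentially the same route as the paper: isolate the positive parts $[\varphi_k-\varphi_{k-1}]_+$, derive the contraction $t_{k+1}\leq\alpha t_k+2\eta_k$, sum the geometric recursion to get summability, and conclude via the non-increasing, bounded-below auxiliary sequence $\varphi_k-\sum_j t_j$. Your explicit justification of $\alpha_k x\leq\alpha (x)_+$ for all real $x$ is in fact slightly more careful than the paper's intermediate chain of inequalities.
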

\begin{proof}
Define $\gamma_k := \varphi_k - \varphi_{k-1}$, for $k \geq 0$. Thus,
it follows from the assumptions
$$
\gamma_{k+1} < \alpha_k \gamma_k + 2\eta_k \ \leq \
\alpha \gamma_k + 2\eta_k \ \leq \
\alpha [\gamma_k]_+ + 2\eta_k \, , \ k \geq 0 \, ,
$$
where $[t]_+ := \max\{t,0\}$, $t \in \R$. Consequently, \
$[\gamma_{k+1}]_+ \leq \alpha [\gamma_k]_+ + 2\eta_k \, , \ k \geq 0$.
A recursive use of this inequality yields
$[\gamma_{k+1}]_+ \leq \alpha^{k+1} [\gamma_0]_+ + 2 \sum_{j=0}^k
\alpha^j \eta_{k-j}$, for $k \geq 0$. Therefore,
$$
\summ_{k=0}^\infty [\gamma_{k+1}]_+ \ \leq \ \D\frac{\alpha}{1-\alpha} [\gamma_0]_+
\ + \ \frac{2}{1-\alpha} \summ_{k=0}^\infty \eta_k \, .
$$
Since $\sum_{k\geq0}\eta_k$ is summable, the right hand side is finite.

Define $\zeta_k := \varphi_k - \sum_{j=1}^k[\gamma_j]_+$, for $k \geq 1$.
Notice that $(\zeta_k)$ is bounded from below. Moreover, $(\zeta_k)$ is
monotone non-increasing. Indeed,
$$
\zeta_{k+1} \ = \ \varphi_{k+1} - [\gamma_{k+1}]_+ - \summ_{j=1}^k[\gamma_j]_+
\ \leq \ \varphi_{k+1} - (\varphi_{k+1} - \varphi_k) - \summ_{j=1}^k[\gamma_j]_+
\ = \ \zeta_k \, , \ k \geq 1 \, .
$$
Consequently, $(\zeta_k)$ converges and we conclude
$\lim_k \varphi_k = \sum_{j\geq1}[\gamma_j]_+ + \lim_k \zeta_k =: \bar\varphi$.
\end{proof}

% --------------------------------------------------------------------
\section*{Acknowledgments}

AL acknowledges support from the AvH Foundation.
ALM acknowledges the financial support funding agency FAPERJ,
from the State of Rio de Janeiro.

% --------------------------------------------------------------------
\bibliographystyle{amsplain}
\bibliography{inertIT}

% --------------------------------------------------------------------
% FIGURES
% --------------------------------------------------------------------

% PAGE 1
%--------------------------------------
\begin{figure}[t]
%
% \centerline{\includegraphics[width=0.6\textwidth]{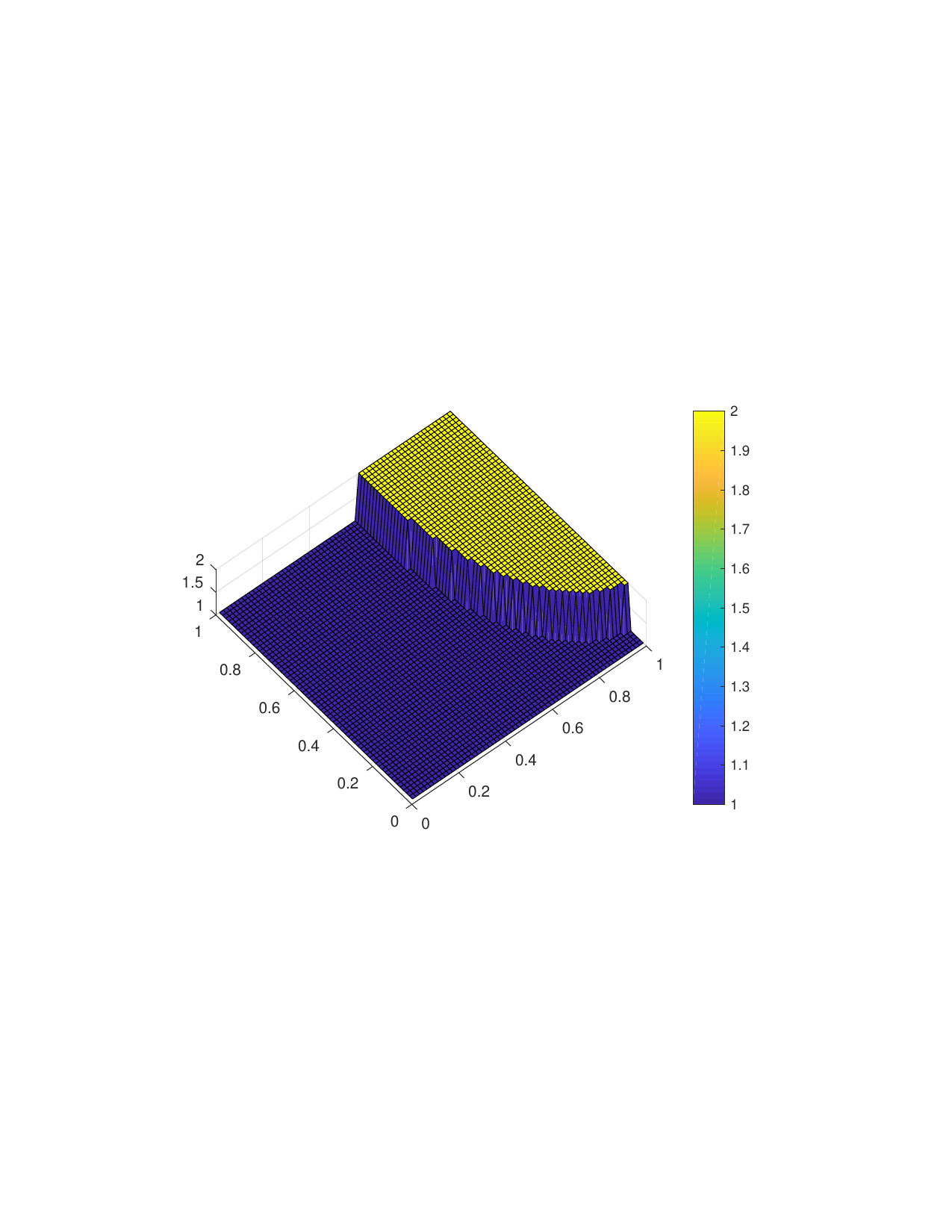} \hskip-3.3cm
%             \includegraphics[width=0.6\textwidth]{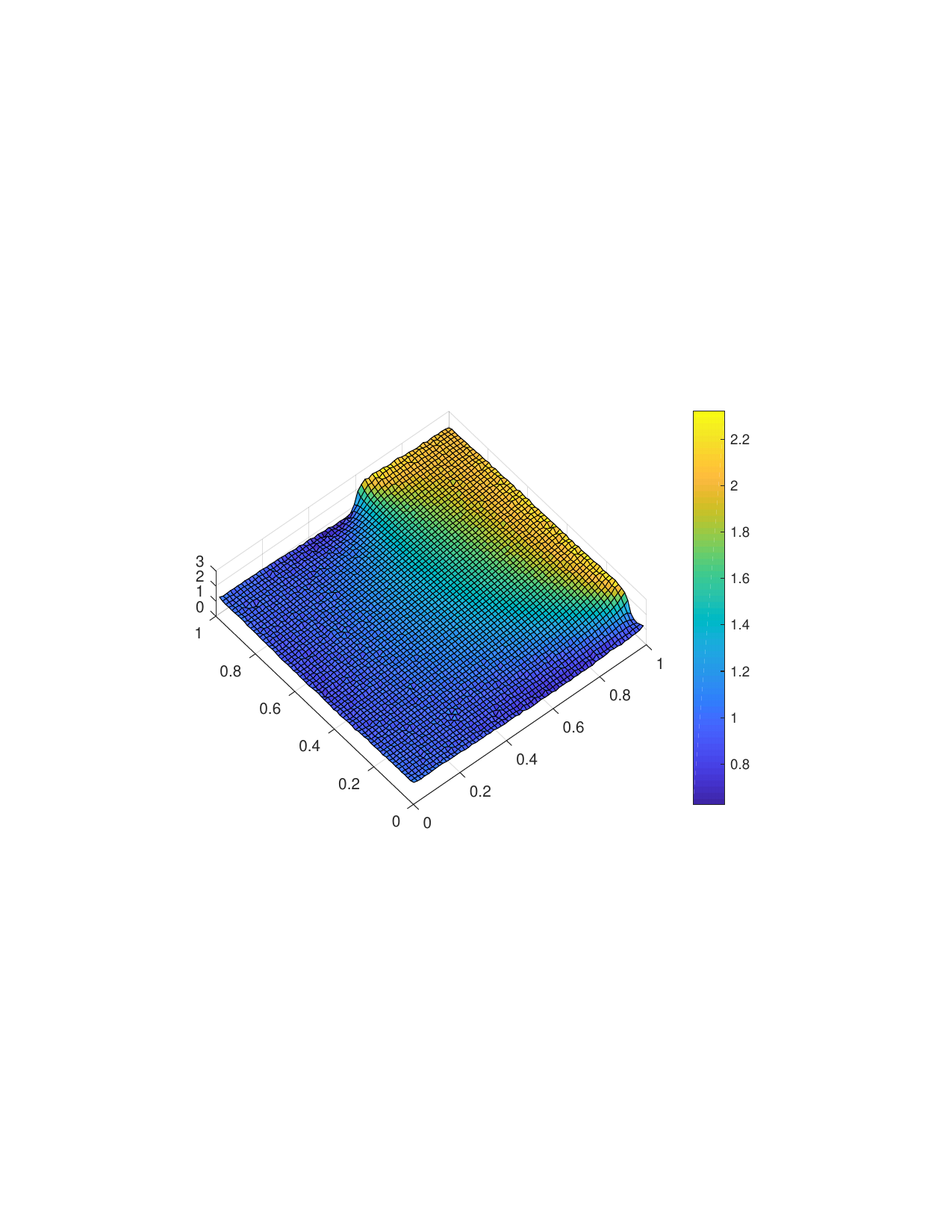}  \hskip-3.3cm
%             \includegraphics[width=0.6\textwidth]{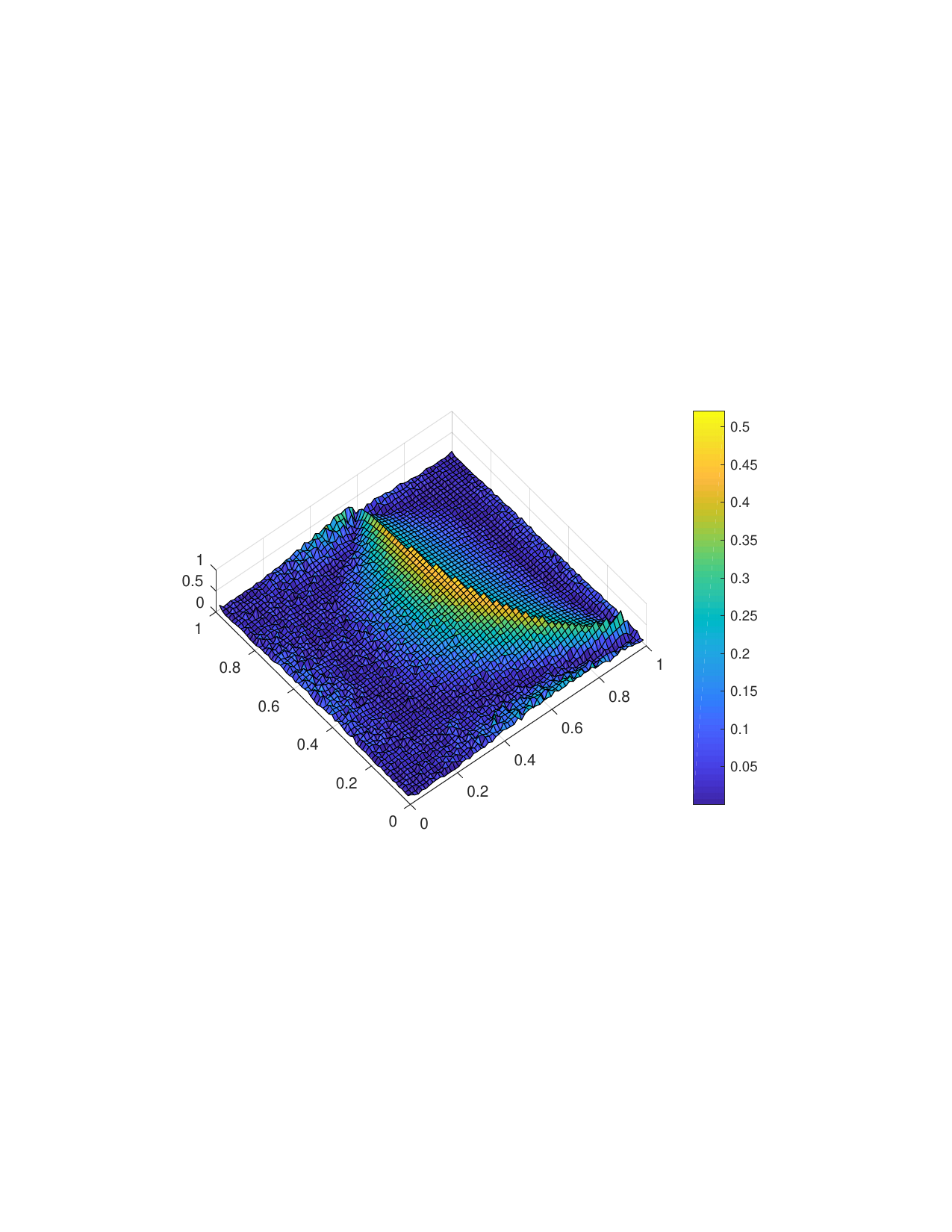}
%
\vskip-8cm  \includegraphics[width=1.0\textwidth]{fig_ExactSol}
\vskip-12.5cm \includegraphics[width=1.0\textwidth]{fig_d01_x17}
\vskip-12.5cm \includegraphics[width=1.0\textwidth]{fig_d01_err17}
\vskip-6.5cm
\caption{\small IPP Noise level $0.1\%$. Results obtained by the iniT method (the
stopping criterion is reached at $k^*(\delta) = 17$ steps). (TOP) Ground truth
$x^\star$; (CENTER) Approximate solution $x_{17}^\delta$; (BOTTOM) Relative
iteration error $|x_{17}^\delta-x^\star| / |x^\star|$.}
\label{fig:IPP-setup}
\end{figure}
%--------------------------------------

% PAGE 2
%--------------------------------------
\begin{figure}[t]
\vskip-3.4cm
\centerline{\includegraphics[width=0.75\textwidth]{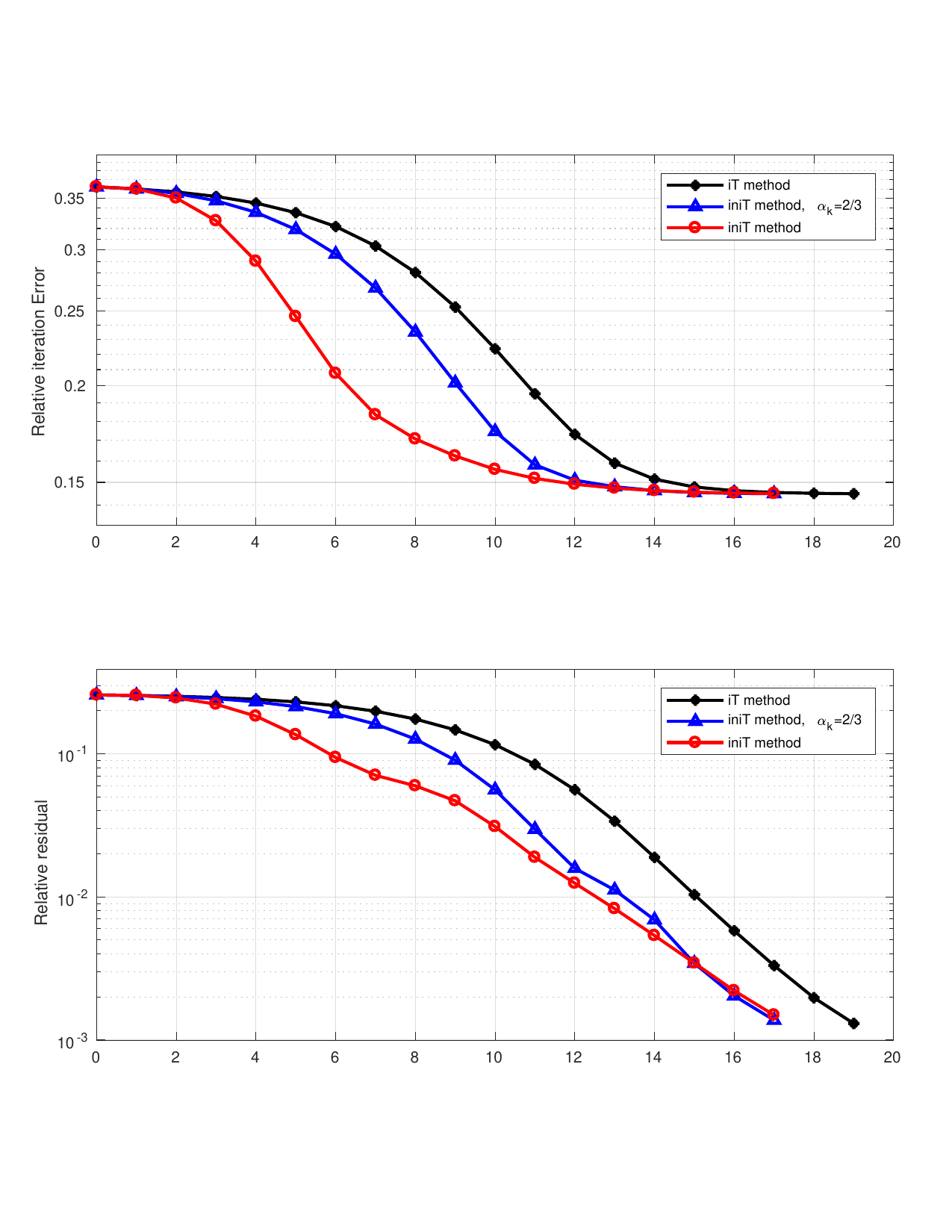}}
\vskip-2.5cm
\caption{\small IPP Noise level $0.1\%$. Comparison between iT and iniT methods.
(TOP) Relative iteration error; (BOTTOM) Relative residual.}
\label{fig:IPP-evolution1}
\end{figure}
%--------------------------------------
%--------------------------------------
\begin{figure}[t]
\vskip-1.3cm
\centerline{\includegraphics[width=0.75\textwidth]{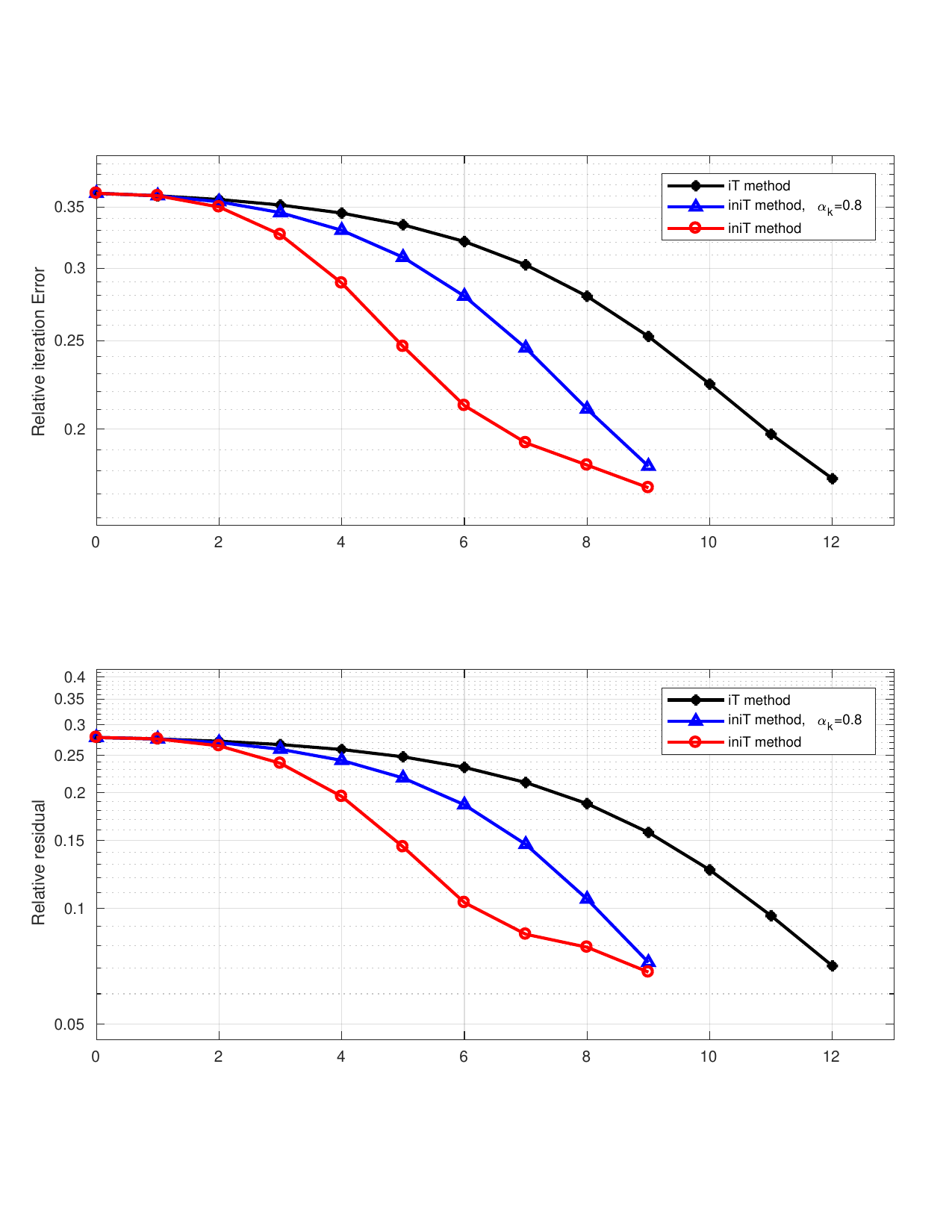}}
\vskip-2.5cm
\caption{\small IPP Noise level $5.0\%$. Comparison between iT and iniT methods.
(TOP) Relative iteration error; (BOTTOM) Relative residual.}
\label{fig:IPP-evolution2}
\end{figure}
%--------------------------------------

% PAGE 3
%--------------------------------------
\begin{figure}[t]
\vskip-3.4cm
\centerline{\includegraphics[width=0.75\textwidth]{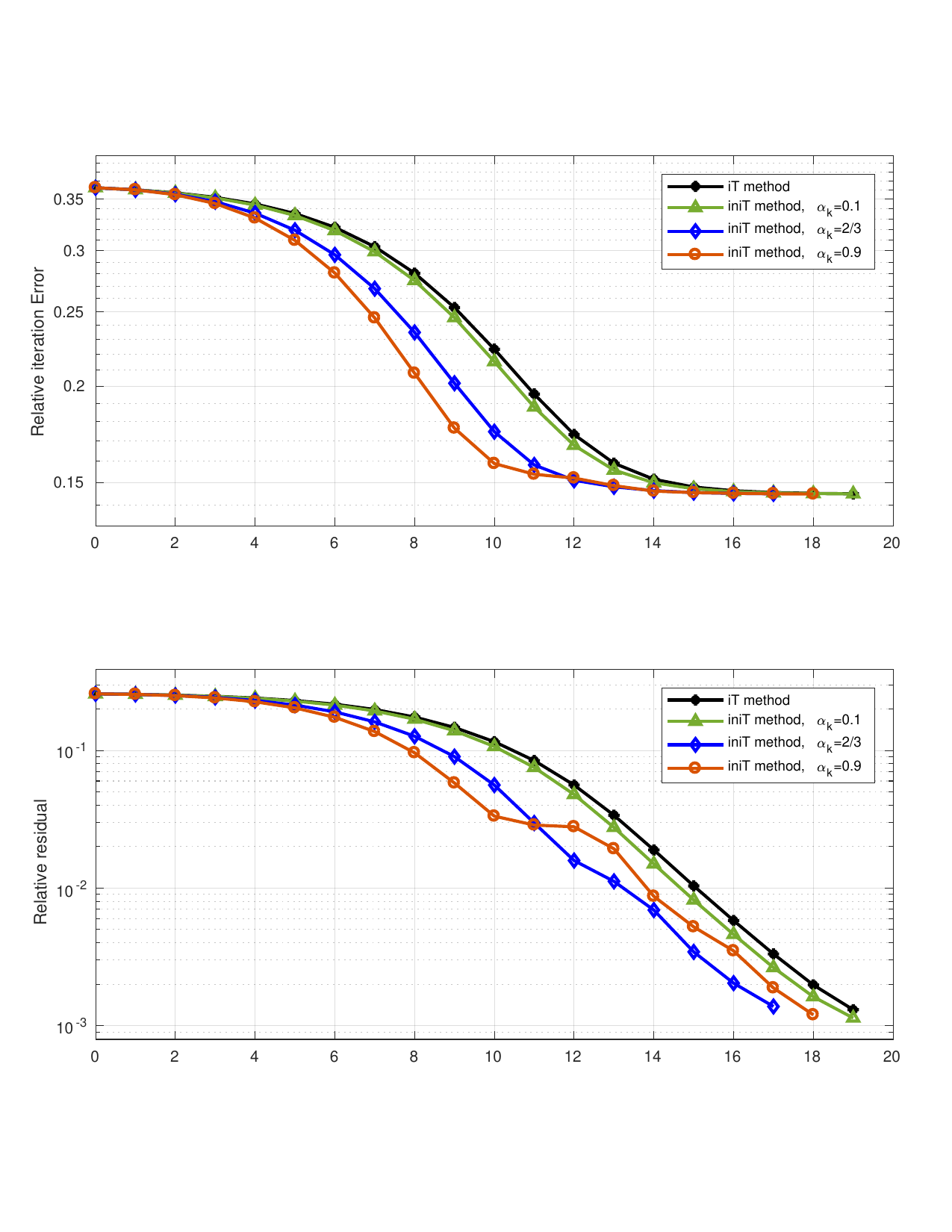}}
\vskip-2.5cm
\caption{\small IPP Noise level $0.1\%$. Comparison between iT method and iniT
method with constant $\alpha_k$. (TOP) Relative iteration error; (BOTTOM)
Relative residual.}
\label{fig:IPP-AlphaConst1}
\end{figure}
%--------------------------------------
%--------------------------------------
\begin{figure}[t]
\vskip-1.3cm
\centerline{\includegraphics[width=0.75\textwidth]{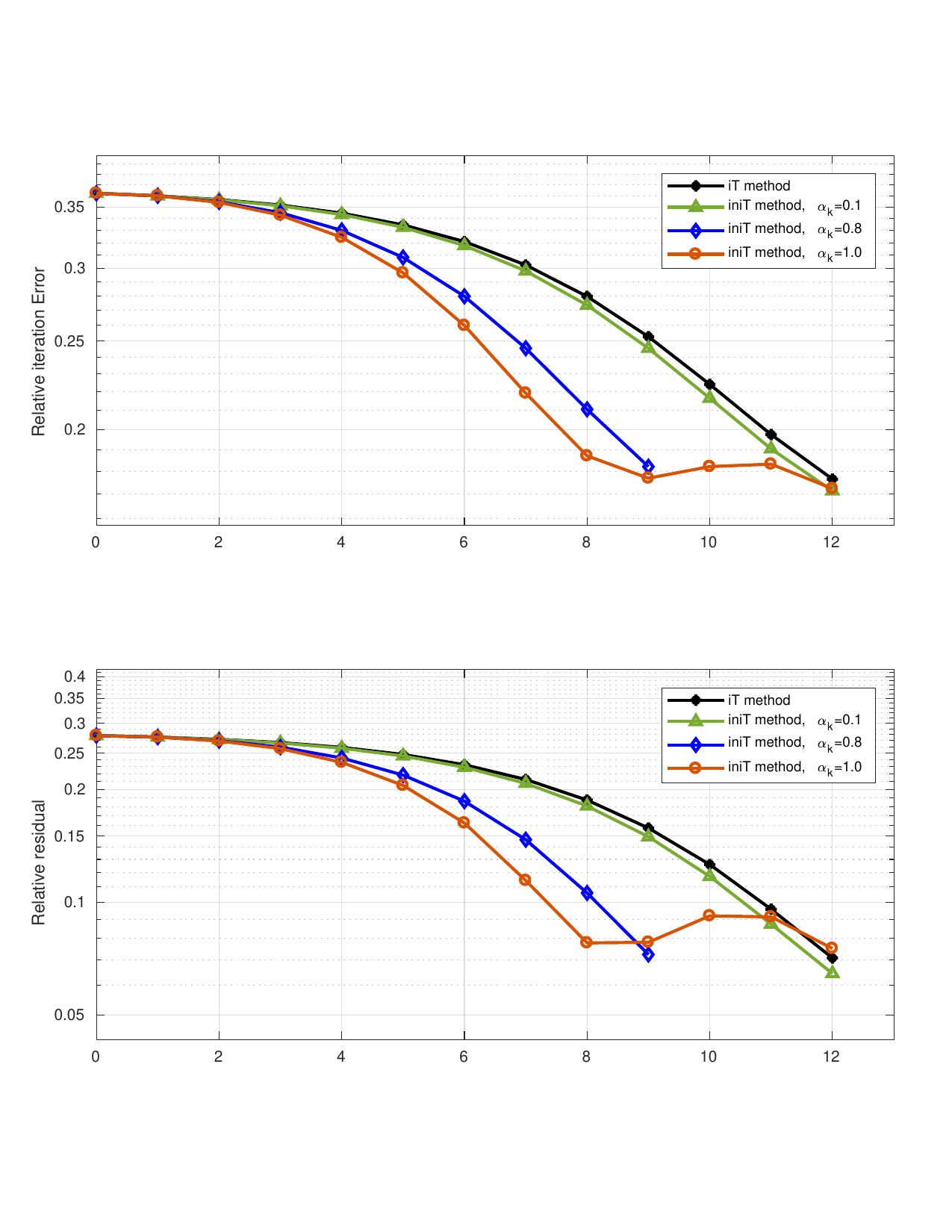}}
\vskip-2.5cm
\caption{\small IPP Noise level $5.0\%$. Comparison between iT method and iniT
method with constant $\alpha_k$. (TOP) Relative iteration error; (BOTTOM)
Relative residual.}
\label{fig:IPP-AlphaConst2}
\end{figure}
%--------------------------------------

\newpage

% PAGE 4
%--------------------------------------
\begin{figure}[t]
\vskip-8cm
\centerline{\includegraphics[width=1.05\textwidth]{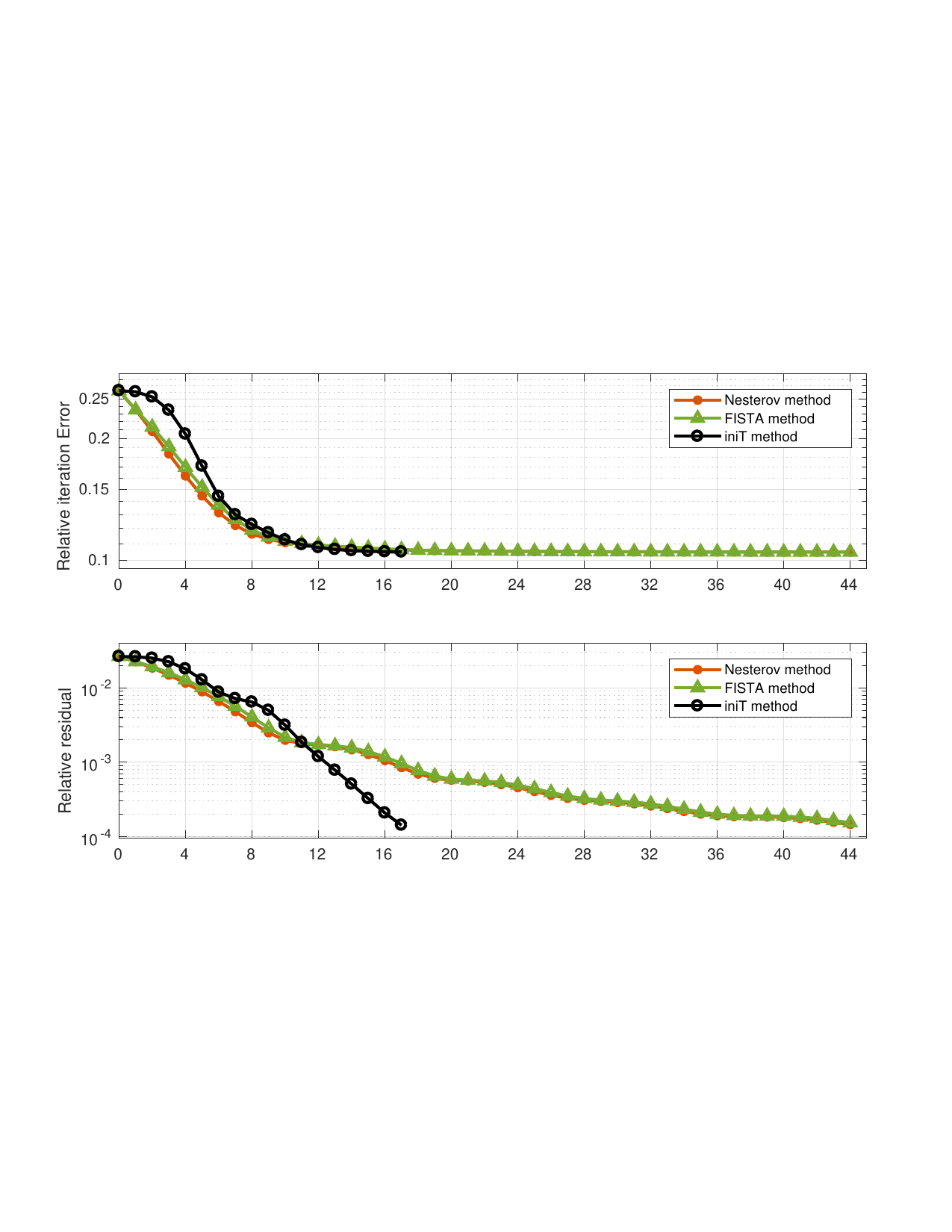}}
\vskip-6.5cm
\caption{\small IPP Noise level $0.1\%$. Comparison between iniT, Nesterov and
FISTA methods. (TOP) Relative iteration error; (BOTTOM) Relative residual.}
\label{fig:IPP-explicit}
\end{figure}
%--------------------------------------
%--------------------------------------
\begin{figure}[t]
\vskip-1.0cm
\centerline{\includegraphics[width=0.4\textwidth]{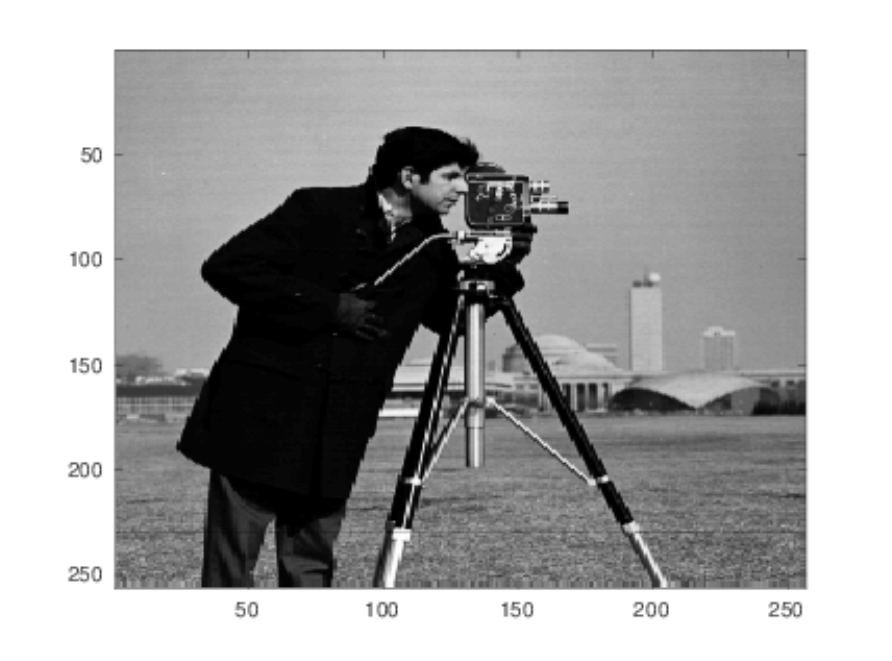} \hskip-0.5cm
            \includegraphics[width=0.42\textwidth]{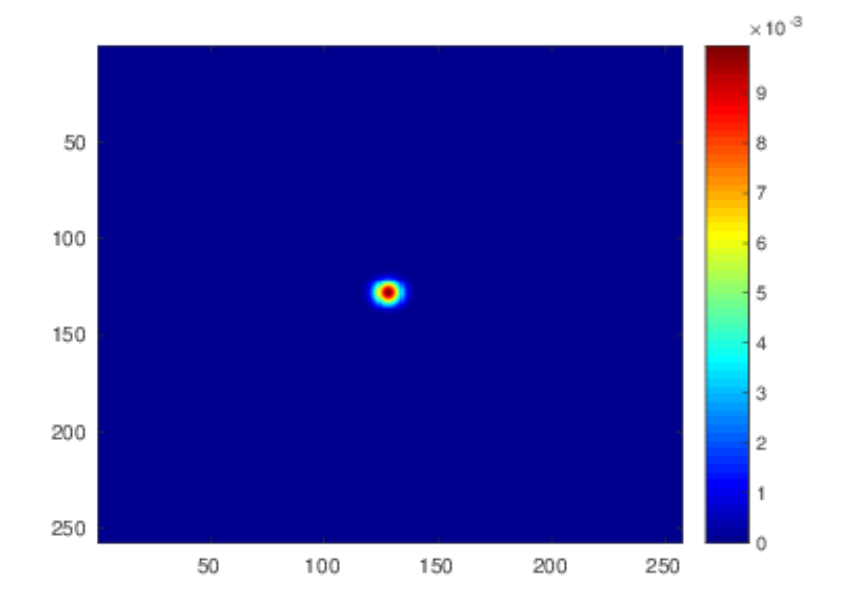}  \hskip-0.8cm
            \includegraphics[width=0.39\textwidth]{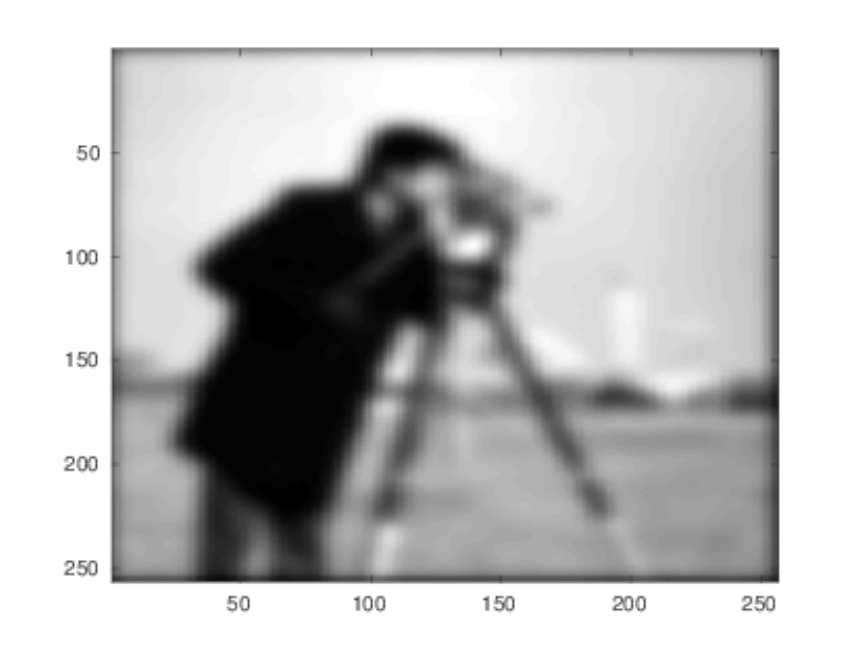}}
\vskip-0.3cm
\caption{\small IDP. Setup: (LEFT) True image;
(CENTER) PSF; (RIGHT) Blurred image.}
\label{fig:ID-setup}
\end{figure}
%--------------------------------------
%--------------------------------------
\begin{figure}[t]
\vskip-4.0cm
\centerline{\includegraphics[width=0.7\textwidth]{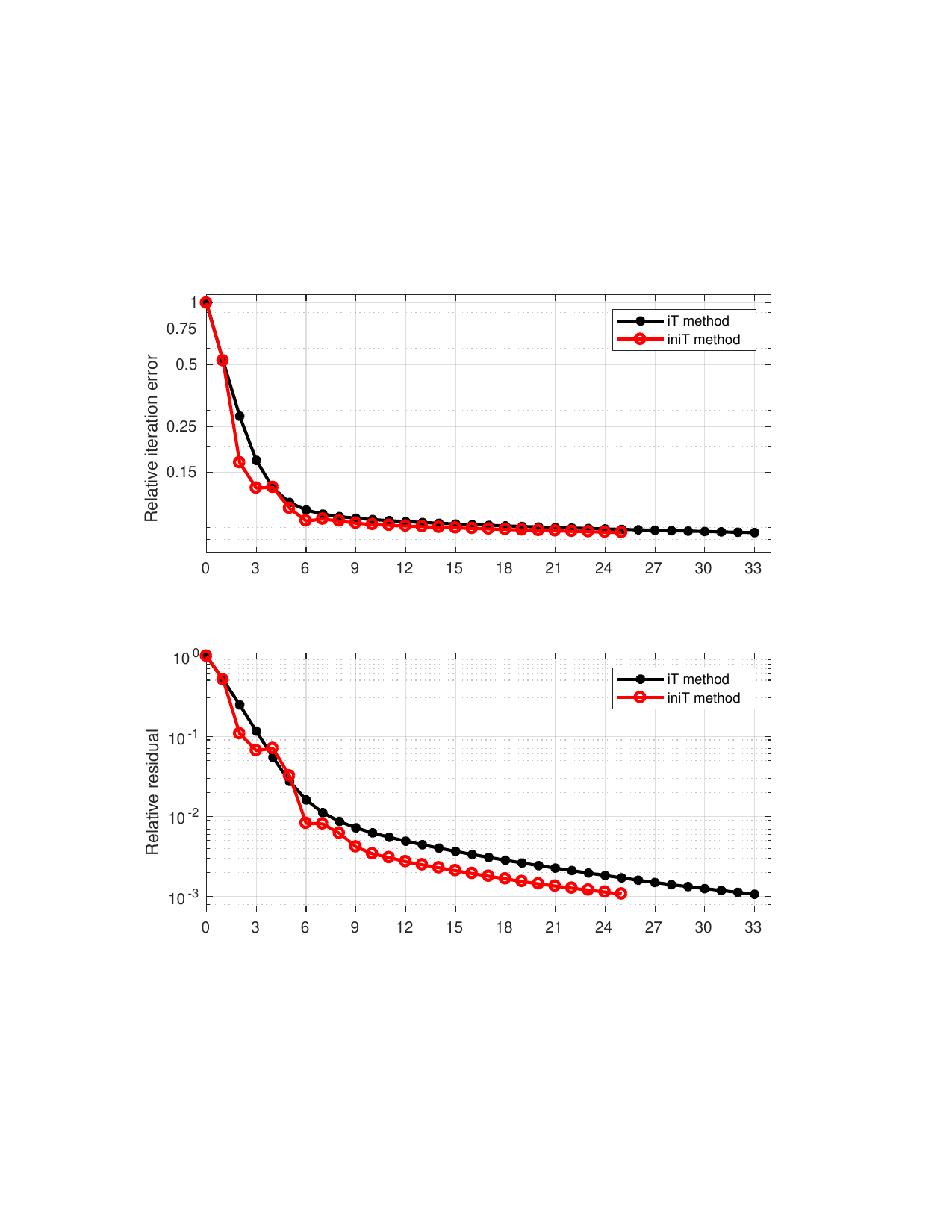} \hskip-3cm
            \includegraphics[width=0.7\textwidth]{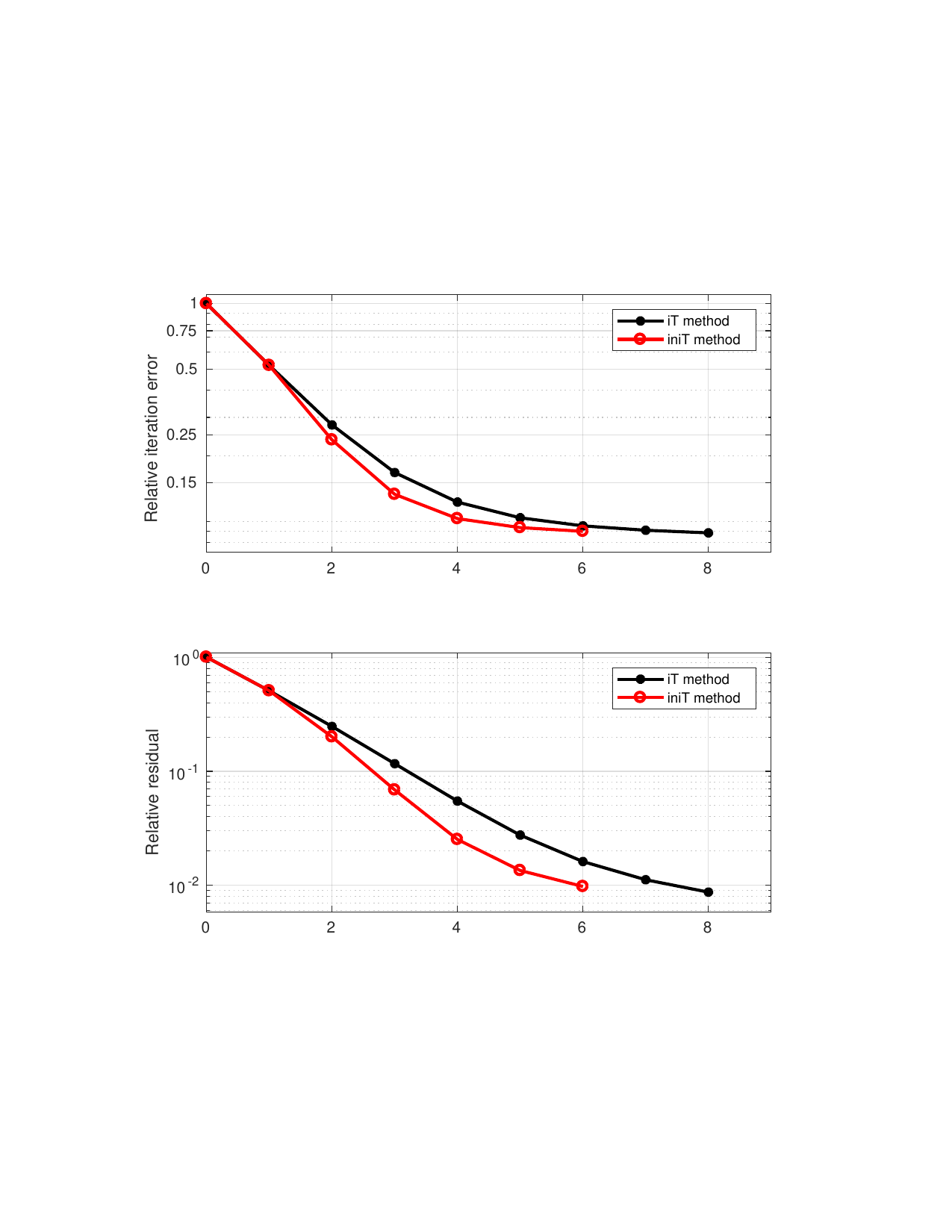}}
\vskip-3.6cm
\caption{\small IDP. Relative iteration error and relative residual: (LEFT) $\delta =
0.1\%$; (RIGHT) $\delta = 1.0\%$.}
\label{fig:ID-evol}
\end{figure}
%--------------------------------------

\end{document}